\pdfoutput=1
\documentclass{amsart}
\usepackage[utf8]{inputenc}

\usepackage{geometry}
\geometry{top=1.0in, bottom=1.0in, left=1.5in, right=1.5in}

\newenvironment{Comment}[2]{\noindent\color{#1}{\texttt #2: }}{}

\usepackage{amsmath}
\usepackage{amsthm}
\usepackage{mathrsfs}
\usepackage{amsfonts}
\usepackage{ amssymb }
\usepackage{amsmath,amscd}
\usepackage{array}
\usepackage{amssymb}
\usepackage[all, cmtip]{xy}
\usepackage{tikz}
\usepackage{enumitem}
\usepackage{tensor}
%\usepackage{showlabels}
%usepackage{ stmaryrd }
\usetikzlibrary{arrows}
\usetikzlibrary{cd}
%\usepackage{hyperref}
%\usepackage[backend=biber,
%style=alphabetic,
%isbn=false,
%    url=false,
%    doi=false,
%    hyperref=false,
%    backref=false,
%    giveinits=false
%sorting=nyt]{biblatex}
\usepackage[backend=biber,
style=alphabetic,
isbn=false,
url=false,
sorting=nyt]{biblatex}
\usepackage{bm}
\addbibresource{references.bib}

\newtheorem{theorem}{Theorem}[subsection]
\newtheorem{lemma}[theorem]{Lemma}

\newtheorem{proposition}[theorem]{Proposition}
\newtheorem{definition}[theorem]{Definition}
\newtheorem{corollary}[theorem]{Corollary}
\newtheorem{remark}[theorem]{Remark}

\newcommand{\CC}{\mathbb{C}}
\newcommand{\QQ}{\mathbb{Q}}

\newcommand{\PP}{\mathbb{P}}
\newcommand{\EE}{\mathbb{E}}

\newcommand{\cF}{\mathcal{F}}
\renewcommand{\AA}{\mathbb{A}}
\newcommand{\TT}{\mathbb{T}}
\newcommand{\LL}{\mathbb{L}}
\newcommand{\T}{\mathscr{T}}
\newcommand{\ZZ}{\mathbb{Z}}

\newcommand{\bS}{\mathbf{S}}
\newcommand{\OO}{\mathscr{O}}

\newcommand{\Mbar}[2]{\mathcal{\overline{M}}_{#1, #2}}
\newcommand{\cZ}{\mathcal{Z}}
\newcommand{\bu}{\mathbf{u}}
\newcommand{\bx}{\mathbf{x}}

\newcommand{\g}{\mathfrak{g}}
\renewcommand{\P}{\mathscr{P}}
\renewcommand{\L}{\mathscr{L}}
\newcommand{\D}{\mathscr{D}}

\newcommand{\rpic}{r_{\mathrm{Pic}}}
\newcommand{\rchi}{r_\chi}
\newcommand{\bdtilde}{{r_T}}
\newcommand{\bd}{r_G}

\newcommand{\bxi}{\pmb{\xi}}

\newcommand{\sslash}{\mathord{/\mkern-6mu/}}

\renewcommand{\Im}{\mathrm{im}}

\newcommand{\fix}{\mathrm{fix}}
\newcommand{\mov}{\mathrm{mov}}
\newcommand{\vir}{\mathrm{vir}}
\newcommand{\Bun}{\mathfrak{B}un}
\newcommand{\specialP}{\mathbb{P}^1}

\newcommand{\csigma}{\mathcal{S}}
\newcommand{\twist}{a}
\newcommand{\bc}{\mathbf{c}}
\newcommand{\fU}{\mathfrak{U}}

\newcommand{\evhat}{\hat{ev}}
\newcommand{\II}{\mathbb{I}}
\newcommand{\bt}{\mathbf{t}}

\newcommand{\Sec}[3]{\underline{\mathrm{Sec}}_{#1}(#2/#3)}

\newtheorem{example}{Example}

\DeclareMathOperator{\Aut}{Aut}

\DeclareMathOperator{\Hom}{Hom}

\DeclareMathOperator{\Pic}{Pic}
\DeclareMathOperator{\Spec}{Spec}
\DeclareMathOperator{\Bunset}{Bun}

%\address{Paola Comparin, Laboratoire de Math\'ematiques et Applications, Universit\'e de Poitiers,
% T\'el\'eport 2, 
%Boulevard Marie et Pierre Curie,
%86962 Futuroscope Chasseneuil, France}
%\email{paola.comparin@math.univ-poitiers.fr}
\begin{document}

\title{The Abelian-Nonabelian Correspondence for $I$-functions}
\author{Rachel Webb}

\address[R. Webb]{Department of Mathematics\\
University of California, Berkeley\\
Berkeley, CA 94720-3840\\
U.S.A.}
\email{rwebb@berkeley.edu}

\date{\today}

\begin{abstract}
We prove the abelian-nonabelian correspondence for quasimap $I$-functions. That is, if $Z$ is an affine l.c.i. variety with an action by a complex reductive group $G$, we prove an explicit formula relating the quasimap $I$-functions of the GIT quotients $Z\sslash_{\theta} G$ and $Z\sslash_{\theta} T$ where $T$ is a maximal torus of $G$. We apply the formula to compute the $J$-functions of some Grassmannian bundles on Grassmannian varieties and Calabi-Yau hypersurfaces in them.
\end{abstract}

\keywords{quasimaps, I-function, abelianization, moduli of maps from the projective line}

\subjclass[2010]{14L30, 14N35, 14D23}

\maketitle

\setcounter{tocdepth}{1}
\tableofcontents

\section{Introduction}
Let $Z$ be an affine l.c.i. variety and let $G$ be a connected complex reductive algebraic group acting on $Z$ with maximal torus $T$. A character $\theta$ of $G$ defines a linearization of the trivial bundle on $Z$. From this data, we get two GIT quotients with a rational map between them: $Z\sslash T \dashrightarrow Z \sslash G$. The \textit{abelian-nonabelian correspondence} is a conjectured relationship \cite[Conj~3.7.1]{frob-ab-nonab} between the genus-zero Gromov-Witten invariants of $Z\sslash G$ and those of $Z \sslash T$. This paper proves a correspondence of their small quasimap $I$-functions, and of the big $I$-functions when $Z$ is affine space. In certain cases, our $I$-function correspondence implies \cite[Conj~3.7.1]{frob-ab-nonab}.

% The abelian-nonabelian correspondence was observed for Grassmannians in the physics paper of Hori-Vafa \cite{horivafa}, and they conjectured that it should extend to complete intersections in flag varieties. In mathematics,
% the cohomology of $X \sslash G$ and $X\sslash T$ was studied by Ellingsrud-Stromme \cite{ellingsrud} and by Martin \cite{martin}: classes in $H^*(Z\sslash G, \QQ)$ can be ``lifted'' to classes in $H^*(Z \sslash T, \QQ)$, not necessarily uniquely, and the Poincar\'e pairings are related. 
% The full correspondence of the genus-zero Gromov-Witten invariants is conjectured in \cite{frob-ab-nonab}, and when $Z\sslash G$ is Fano with a sufficiently nice torus action, this correspondence is shown to be equivalent to a correspondence of small $J$-functions (generating series that parameterize invariants with one insertion). After applying the mirror result of \cite{wcgis0}, the full abelian-nonabelian correspondence for these ``nice'' targets is equivalent to a correspondence of small $I$-functions.\todo{revise this? make it super clear what this paper does. add mention of "big" I-functions. Can these ever be used to compute invariants???}

\subsection{Statement of the main result}\label{sec:statement}
Let $Z$, $G$, $T$, and $\theta$ be as stated. We work in the setting of \cite[Section~2.1]{wcgis0}.
Let $Z^s(G)$ and $Z^{ss}(G)$ denote the $\theta$-stable and semistable points in $Z$. We assume that $Z^s(G) = Z^{ss}(G)$ is smooth and not empty and that $G$ acts on $Z^s(G)$ freely. We also assume that each of these statements holds with $T$ in place of $G$. Hence $V\sslash_{\theta}G$ and $V\sslash_{\theta}T$ are smooth varieties. We fix the character $\theta$ for all of this paper, and we will generally write $Z\sslash G := Z\sslash_\theta G$ and $Z\sslash T := Z\sslash_\theta T$.

The small quasimap $I$-function of $Z\sslash G$, defined in \cite{stable_qmaps}, has the form
\begin{equation}\label{eq:Ifunc_shape}
I^{Z\sslash G}(z) = 1+ \sum_{\beta\neq 0}q^\beta I^{Z\sslash G}_\beta(z) 
\end{equation}
where $\beta$ is in $\Hom(\Pic^G(Z), \ZZ)$ with $\Pic^G(Z)$ the group of $G$-equivariant line bundles on $Z$, $q^{\beta}$ is a formal variable, and the coefficients $I^{Z\sslash G}_{\beta}(z)$ are formal series in $z$ and $z^{-1}$ with coefficients in $H^*(Z\sslash G)$.

To state the main theorem, we make two observations. First, the rational map $Z\sslash T \dashrightarrow Z\sslash G$ may be stated more precisely via the diagram
\begin{equation}\label{eq:key_diagram}
\begin{tikzcd}
Z^s(G)/T \arrow[hook, r, "j"] \arrow[d, "g"] & Z^s(T)/T\\
Z^s(G)/G &
\end{tikzcd}
\end{equation}
Second, there is an inclusion 
\begin{equation}\label{eq:inclusion}
\chi(G) \rightarrow \Pic^G(Z) 
\end{equation}
sending the character $\xi$ to \begin{equation}\label{eq:defL}
\L_{\xi}:= Z \times \CC_{\xi}\end{equation}
where $\CC_{\xi}$ is the representation with character $\xi$. The line bundle $\L_\xi$ descends to a line bundle on $Z\sslash G$ which we also denote by $L_\xi$.

\begin{theorem}\label{thm:main}
The small $I$-functions of $Z\sslash G$ and $Z\sslash T$ satisfy
\begin{equation}\label{eq:main}
g^* I^{Z\sslash G}_{\beta}(z) = j^*\left[ \sum_{\tilde \beta \rightarrow \beta} \left( \prod_{\rho} \frac{\prod_{k=-\infty}^{\tilde\beta(\rho)}(c_1(\L_{\rho}) + kz)}{\prod_{k=-\infty}^0 (c_1(\L_{\rho}) + kz)}\right)I^{Z\sslash T}_{\tilde \beta}(z)\right],
\end{equation}
where the sum is over all $\tilde \beta$ mapping to $\beta$ under the natural map $\Hom(\Pic^T(Z), \ZZ) \rightarrow \Hom(\Pic^G(Z), \ZZ)$ and the product is over all roots $\rho$ of $G$.
\end{theorem}
Since $g^*$ is injective, the equality \eqref{eq:main} completely determines $I^{Z\sslash G}$; we explain in Section \ref{sec:interpret} how to make sense of this sentence. When $Z$ is a vector space, one obtains a closed formula for $I_\beta^{Z\sslash G}$ by combining \eqref{eq:main} with the formula for $I^{Z\sslash T}_{\tilde \beta}$ in \cite[Thm~5.4]{orb-qmaps} (which agrees with the formula in \cite{givental}).

Previous to this work, Theorem \ref{thm:main} was known for $Z \sslash G$ equal to a flag variety or the Hilbert scheme of $n$ points in $\CC^2$ \cite{twoproofs} \cite{ab-nonab} \cite{hilbert}. Since the posting of this paper, a proof of this result for quiver flag varieties has also appeared \cite{elana}. Finally, a result similar to Theorem \ref{thm:main} was proved in the language of stable gauged maps in \cite[Thm~1.4,~Thm~3.9]{gonzalez}. Gauged maps apply in a different context than quasimaps: while quasimap theory requires $Z$ to be an affine l.c.i. variety, the theorey of gauged maps mandates that $Z$ be smooth projective. It is not clear how the theories of gauged maps and quasimaps are related \cite[Rmk~4.4]{orb-qmaps}.

We remark that Theorem \ref{thm:main} completes a provisional result in \cite{inoue}. The bulk of the proof of Theorem \ref{thm:main} is a careful analysis of certain moduli spaces of maps from $\PP^1$ to $[Z/G]$ and $[Z/T]$; the geometry of these moduli spaces is summarized in Proposition \ref{prop:diagram}. This geometry may be useful in other contexts. For instance, it is used in \cite{yaoxiong} to compute quasimap $I$-functions in $K$-theory.

\subsection{Extensions and applications}
This paper also proves the natural extension of Theorem \ref{thm:main} to the equivariant and twisted theories (Corollaries \eqref{cor:equivariant} and \eqref{cor:twisted}). We also use the theory developed in \cite{bigI} to write down a big $I$-function for $Z\sslash_\theta G$ when $Z$ is a vector space (Corollary \ref{cor:bigI}). This recovers, for example, an explicit big $I$-function for the Grassmannian $Gr(k, n)$.

With the reconstruction result in \cite{frob-ab-nonab} and the mirror result in \cite{wcgis0}, the equivariant version of Theorem \ref{thm:main} implies the full abelian-nonabelian correspondence for projective Fano quotients $Z\sslash G$ with ``nice'' torus actions (Corollary \ref{cor:fm-abelianization}). We use this result to explicitly compute an equivariant twisted small $J$-function for a Grassmannian bundles on a Grassmannian variety (Theorem \ref{thm:example}).

\subsection{Conventions and notation}\label{sec:notation}
We work over $\CC$. A variety is an integral separated finite type scheme. Fix an affine variety $Z$ with a left action by $G$ that is a complex reductive algebraic group over $\CC$ and fix $T\subset G$ a maximal torus. Let $N_G(T)$ be the normalizer of $T$ in $G$, and let $W = N_G(T)/T$ denote the Weyl group. The characters of $G$ are $\chi(G)$.

Fix a character $\theta \in \chi(G)$. Let $Z^s(G)$ and $Z^{ss}(G)$ denote the $\theta$-stable and semistable loci as defined in \cite[Section~2]{king}. We assume that $Z^s(G) = Z^{ss}(G)$ is smooth and not empty and that $G$ acts on $Z^s(G)$ freely. We also assume that each of these statements holds with $T$ in place of $G$. Hence $V\sslash_{\theta}G$ and $V\sslash_{\theta}T$ are smooth varieties, projective over $\bS_G := \Spec(H^0(Z, \OO_Z)^G)$ and $\bS_T :=\Spec(H^0(Z, \OO_Z)^T),$ respectively.

\subsection{Organization of the paper}
In Sections 2 and 3 we review standard facts about abelian/nonabelian correspondences and quasimaps, respectively, writing out many well-known formulae explicitly so that we can reference them in computations later. Section 4 explains the geometry behind Theorem \ref{thm:main} and Section 5 completes the proof of this theorem. Section 6 proves various extensions and applications of the main result.

\subsection{Acknowledgements} Thanks to Robert Silversmith for explaining quasimaps to me in a concrete way, and to Yongbin Ruan for helpful discussions. Thank you to Jonathan Wise, Damiano Fulghesu, Matthew Satriano, and Martin Olsson for teaching me enough algebraic geometry so I could write this paper rigorously. An anonymous referee provided many helpful comments and corrections. This project was partially supported by NSF RTG grant 1045119 and an NSF Postdoctoral Research Fellowship, award number 200213.
%%%%%%%%%%%%%%%%%%%%%%%%%%%%%%%%%%%%%%%%%%%%%%%
%%%%%%%%%%%%%%%%%%%%%%%%%%%%%%%%%%%%%%%%%%%%%%%
%%%%%%%%%%%%%%%%%%%%%%%%%%%%%%%%%%%%%%%%%%%%%%%
\section{Some first abelian/nonabelian correspondences}
%%%%%%%%%%%%%%%%%%%%%%%%%%%%%%%%%%%%%%%%%%%%%%%
%%%%%%%%%%%%%%%%%%%%%%%%%%%%%%%%%%%%%%%%%%%%%%%
%%%%%%%%%%%%%%%%%%%%%%%%%%%%%%%%%%%%%%%%%%%%%%%
\subsection{Preliminaries on principal bundles}
A principal $G$-bundle on a scheme $C$ is a scheme $\pi: \P\rightarrow C$ with $\pi$ faithfully flat and locally finitely presented, together with an action $\mu: G \times \P \rightarrow \P$ leaving $\pi$ invariant such that the map
\[
G \times \P \xrightarrow{(\mu, pr_2)} \P \times_C\P
\]
is an isomorphism. With our assumptions, a principal $G$-bundle is locally trivial in the \'etale topology (see eg \cite[Rmk~4.5.7]{olsson-book}).

If $\P \rightarrow C$ is a principal $G$-bundle on a scheme $C$ and $Z$ is an affine variety with a left $G$-action, then we define the \textit{mixing space} to be the quotient
\begin{equation}\label{eq:principal1}
\P \times_G Z = (\P\times Z)/G \quad \quad \text{where}\;g\cdot(p, z) = (gp, gz) \;\text{for}\; g \in G, \;(p,z) \in \P\times Z.
\end{equation}
This space is an \'etale-locally trivial fibration on $C$ with fiber $Z$. A priori it is an algebraic space, but for example if $C$ is finite type then it is known to be a scheme (using affineness of $Z$, see eg \cite[Sec~3]{brion11}).

Our convention in \eqref{eq:principal1} is necessary because we want the stack quotient $Z \rightarrow [Z/G]$ to be represented by principal $G$-bundles. However, it has the unfortunate consequence that the transition functions of $\P$ and $\P\times_G Z$ are inverse to each other. Hence, if $T \subset G$ is a subgroup and $\T\rightarrow C$ is a principal $T$-bundle, we define the \textit{associated G-bundle} to be
\begin{equation}\label{eq:principal2}
G \times_T \T = (G \times \T)/T \quad \quad \text{where}\;t\cdot(g, s) = (gt^{-1}, ts)\;\text{for}\;t \in T, \;(g,s) \in G \times \T.
\end{equation}
The quotient $G \times_T \T$ is a (left) principal $G$-bundle with the same transition function as $\T$.

\subsection{Action of the Weyl group}\label{sec:weylaction}
Heuristically, an abelian/nonabelian correspondence relates data of $G$ to data of $T$ and the Weyl group $W$. In this section we explain the action of $W$ on several objects of interest. 

The Weyl group acts on $T$, characters of $T$, dual characters and cocharacters of $T$ in the usual ways; i.e., for  $w \in N_G(T)$ and $t \in T$, $\xi \in \chi(T)$ a character, $\tilde \alpha \in \Hom(\chi(T), \ZZ)$ a dual character, and $\tau: \CC^* \rightarrow T$ a cocharacter, we have
\begin{equation}\label{eq:dual_actions}
\begin{aligned}
w \cdot t & = wtw^{-1} &
w\cdot \xi(t) &= \xi(w^{-1}\cdot t) \\
w \cdot \tilde \alpha (\xi) &= \tilde \alpha(w^{-1}\cdot \xi) &
(w \cdot \tau)(t) &= w\cdot(\tau(t)) .
\end{aligned}
\end{equation}

Let $Z$ be a $G$-scheme and assume that the quotient $[Z/T]$ is represented by a smooth scheme $Z/T$ (e.g., $Z/T$ is one of the GIT targets considered in this paper). Then the action of the group scheme $W$ on $Z$ descends to $Z/T$ as follows. For $w \in N_G(T)$ and $t \in T$, $z \in Z$ we compute
\begin{equation}\label{eq:twisted}
w(tz) = (wtw^{-1})(wz).
\end{equation}
This shows that the action of $N_G(T)$ descends to $Z/T$, and clearly the action of $T \subset N_G(T)$ is trivial.
Because of the computation \eqref{eq:twisted}, we say that the map $w: Z \rightarrow Z$ is \textit{twisted-equivariant} for the homomorphism $\twist_w: T \rightarrow T$ defined by $\twist_w(t) = wtw^{-1}$; that is, 
\[
w(tz) = \twist_w(t)w(z).
\]
This twisted equivariance manifests itself in the Weyl actions that we describe below.

\subsubsection{On maps to $[Z/T]$}\label{sec:wonmaps}
Even when $[Z/T]$ is not representable it has an action by the group scheme $W$ by \cite[Rmk~2.4]{romagny}, but this action is more subtle. It remembers the fact that as computed in \eqref{eq:twisted}, the morphism $w:Z \rightarrow Z$ is not a morphism of $T$-schemes. We now describe it explicitly.

A morphism $S\rightarrow[Z/T]$ from a scheme $S$ is given by a principal $T$-bundle $\pi:\T\rightarrow S$ and an equivariant map $\T \rightarrow Z$. For $w$ an $S$-point of $N_G(T)$, the morphism $w \cdot (S\rightarrow [Z/T])$ is given by the same data, but with the action on $\T$ twisted by $a_w^{-1}$ (the twisted action is written out in \eqref{eq:weylonbunT} below). This follows from the following commuting diagram (where for a moment we forget the $T$-actions).
\begin{equation}\label{eq:weyl1}
\begin{tikzcd}
\T \arrow[r] \arrow[d, "\pi"] & Z \arrow[r, "w"] \arrow[d] & Z \arrow[d]  \\
S \arrow[r] & {[Z/T]}  \arrow[r, dashrightarrow] & {[Z/T]} 
\end{tikzcd}
\end{equation}
Here, $w$ denotes the automorphism of $Z(\T)$ defined by $\pi^*w \in N_G(\T)$, and the dashed arrow is the automorphism of $[Z/T](S)$ defined by $w$ as in \cite[Rmk~2.4]{romagny}.\footnote{In the language of \cite{romagny}, the composition $Z \xrightarrow{w} Z \rightarrow [Z/T]$ is a \textit{non-strict} morphism of $T$-stacks (the 2-morphism in \cite[(2)]{romagny} is given by the twist $a$). Nevertheless by the universal property of $[Z/T]$ there is an induced morphism $[Z/T] \rightarrow [Z/T]$ making the right square in \eqref{eq:weyl1} 2-commute, and this defines the Weyl action on $[Z/T]$.} With the original $T$-action on $Z$, the map $w$ is not equivariant, but when we twist the $T$-actions on $\T$ and the middle $Z$ by $a_w^{-1}$, both of the top maps in \eqref{eq:weyl1} are equivariant.

For example, when $Z$ is a point we get an action on principal bundles: define $w\T$ to be the $T$-bundle with the same underlying space as $\T$ and with $T$-action $\cdot_w$ given by
\begin{equation}\label{eq:weylonbunT}
t \cdot_w x = (w^{-1}tw)\cdot x \quad \quad \text{for}\;t\in T, \; x \in \T.
\end{equation}
Here $\cdot$ denotes the usual action of $T$ on $\T$. The identity map $\T \rightarrow w\T$ is an $\twist_w$-equivariant isomorphism.

For use when $Z$ is nontrivial, we also describe the action on $[Z/T]$ in terms of sections of fiber bundles. The morphism $S \rightarrow [Z/T]$ given by $\pi: \T \rightarrow S$ and $f: \T \rightarrow Z$ is equivalent to the data of $\T$ and the section $\sigma$ of $\T \times_T Z$ defined by the quotient of $\T \xrightarrow{(id, f)} \T\times Z$. Then $w$ acts by
\begin{equation}\label{eq:weyl2}
w\cdot (\T, \sigma: S \rightarrow \T \times_T Z) = (w\T, \varpi \circ \sigma: S \rightarrow w\T\times_T Z)\end{equation}
where $\varpi: \T \times_T Z \rightarrow w\T \times_T Z$ is the isomorphism coming from the $\twist_w$-equivariant map
\begin{equation}\label{eq:wiso}
\begin{gathered}
\T\times Z \xrightarrow{\varpi} w\T \times Z\\
(x,z) \mapsto (x, wz).
\end{gathered}
\end{equation}

\subsubsection{On $\Pic^T(Z)$}\label{sec:wonpic}
Finally, the group $W$ acts on $\Pic^T(Z)$, by which we mean the group of line bundles on $[Z/T]$, or equivalently $T$-equivariant line bundles on $Z$. Since $W$ acts on $[Z/T]$ we get an action on $\Pic^T(Z)$ by sending $\L \in \Pic([Z/T])$ to the pullback $(w^{-1})^{*}\L$. 

We translate this action to $T$-equivariant line bundles on $Z$. If $\L \rightarrow [Z/T]$ is a line bundle, the corresponding $T$-equivariant line bundle on $Z$ is the pullback $\L_1 := \L \times_{[Z/T]} Z$. We have a commuting diagram 
\begin{equation}\label{eq:wonpic1}
\begin{tikzcd}
w^{-1}(w^*\L_1) \arrow[r, "id"] \arrow[d]& w^*\L_1 \arrow[r] \arrow[d] & \arrow[d]\L_1 \\
Z \arrow[r, "id"] & wZ \arrow[r, "w"] & Z 
\end{tikzcd}
\end{equation}
where $wZ$ is $Z$ with $T$-action twisted as in \eqref{eq:weylonbunT} and $w^{-1}(w^*\L_1)$ is defined analogously (it is the scheme $w^*\L_1$ with $T$-action given by $\cdot_{w^{-1}}$). The right square is fibered and all four of its maps are $T$-equivariant, but in the left square the horizontal maps are twisted-equivariant. The quotient of either the middle or left column by $T$ gives the bundle $w^*\L \rightarrow [Z/T]$, but only the leftmost column describes a $T$-equivariant line bundle on $Z$ that is equal to $w^{-1}\cdot \L_1$. From this one sees that 
\begin{equation}\label{eq:wonpic2}
w\cdot \L_\xi = \L_{w\xi},
\end{equation}
i.e., the map $\chi(T) \rightarrow \Pic^T(Z)$ defined in \eqref{eq:inclusion} is $W$-equivariant.

\subsection{Principal bundles on $\PP^1_k$}\label{sec:bundle_ab_nonab}
Let $k$ be an algebraically closed field. We recall Grothendieck's classification of principal $G$-bundles on $\PP^1_k$, which may be read as an abelian/nonabelian correspondence theorem. 
 Let $\Bunset_G(\PP^1_k)$ denote the set of isomorphism classes of principal $G$-bundles on $\PP^1_k$. There is a natural map
\[\psi: \Bunset_T(\PP^1) \rightarrow \Bunset_G(\PP^1)\]
defined by sending a principal $T$-bundle $\T$ to $G \times_T\T$ (see \eqref{eq:principal2}). The group $W$ acts on $\Bunset_T(\PP^1_k)$ as in \eqref{eq:weylonbunT}. Moreover, for $w \in W$ and $\T \in \Bunset_T(\PP^1_k)$, the principal $G$-bundles $G \times_T \T$ and $G \times_T w\T$ are isomorphic via a map analogous to the one in \eqref{eq:wiso}.
The following theorem is due to Grothendieck; see also \cite[393]{mehta}.

\begin{theorem}[\cite{grothendieck}]\label{lem:bundle_abnonab}
The map $\psi$ induces a bijection $\Bunset_T(\PP^1_k)/W \rightarrow \Bunset_G(\PP^1_k)$.
\end{theorem}

\begin{remark}\label{rmk:bundles}
The isomorphism class of $\T$ is determined by the homomorphism $\tilde \alpha \in \Hom(\chi(T), \ZZ)$ defined by 
\[
\tilde\alpha(\xi) = \deg_{\PP^1}(\T \times_T \CC_\xi) \quad \quad \xi \in \chi(T).
\]
Hence Theorem \ref{lem:bundle_abnonab} says that elements of $\Bunset_G(\PP^1_k)$ biject with Weyl-orbits on $\Hom(\chi(T), \ZZ)$.
\end{remark}

\subsection{Cohomology of $X\sslash G$ and $X\sslash T$}
We recall an abelian-nonabelian correspondence for cohomology of $X \sslash G$ and $X\sslash T$. Recall the diagram \eqref{eq:key_diagram} 
where $j$ is an open embedding and $g$ is a fiber bundle with fiber $G/T$. The following result is well-known, but we could not find a reference in our setting.

\begin{proposition}\label{prop:chow}
The pullback $g^*$ in diagram \eqref{eq:key_diagram} induces an isomorphism
\begin{equation}\label{eq:toshow}
g^*: A_*(Z^s(G)/G)\otimes \QQ \xrightarrow{\sim} (A_*(Z^s(G)/T)\otimes \QQ)^W.
\end{equation}
An analogous statement holds for cohomology rings with coefficients in $\QQ$.
\end{proposition}
\begin{proof} We prove the statement for Chow groups. Compare the statement and its proof with \cite[Thm~10]{Br98}. Notice that $g$ factors through $Z^s(G)/T \rightarrow Z^s(G)/B$ where $B \subset G$ is a borel subgroup containing $T$. Since $Z^s(G)/T \rightarrow Z^s(G)/B$ is an affine fiber bundle, pullback induces an isomorphism on Chow groups \cite[Lem~2.2]{totaro}, and we are reduced to showing \eqref{eq:toshow} with $B$ in place of $T$. 

Let $R=Sym(\chi(T)_\QQ)$ and let $R^W_+$ be the ring of $W$-invariants of positive degree. Recall the characteristic homomorphism that sends a character of $T$ to the associated line bundle on $G/T$. Let 
\[
\phi: R/R^W_+ \xrightarrow{\sim} A_*(G/B)\otimes \QQ
\]
be the ($W$-equivariant) isomorphism induced by the characteristic homomorphism (see for example \cite[22]{Br98}), and let $\{c_i\}_{i=1}^N$ be elements of $R$ with images $[c_i]$ in $R/R^W_+$ such that $\{\phi([c_i])\}_{i=1}^N$ is a basis for $A_*(G/B)$. Then the polynomials $c_i$ also define classes $\tilde c_i$ in $A^*(Z^s(G)/B)\otimes \QQ$ (via an analogous characteristic homomorphism induced by mapping characters to associated line bundles), and the restrictions of these classes to every fiber of $g$ are a basis.

Thus we may apply the algebraic Leray-Hirsch theorem \cite[Lem~6]{EG97} to the $G/B$-fiber bundle $g$. Let $\{b_j\}$ be a basis for $A_*(Z^s(G)/G)\otimes \QQ$. This theorem says that the map $\phi: A_*(Z^s(G)/G)_\QQ \otimes A_*(G/B)_\QQ \rightarrow A_*(Z^s(G)/B)_\QQ$ given by
\[
\sum b_j \otimes \phi([c_i]) \mapsto \sum g^*(b_j)\cup \tilde c_i
\]
is an isomorphism. Because $g$ is $W$-invariant, a direct computation shows that this map is $W$-equivariant. Now we take $W$-invariants of both sides. On the left side, if $\sum b_j \otimes \phi([c_i])$ is $W$-invariant, then because the classes $b_j$ are all $W$-invariant, the classes $[c_i] \in R/R^W_+$ are as well. Hence if they are nonzero, they have degree zero. The result follows.
\end{proof}

%%%%%%%%%%%%%%%%%%%%%%%%%%%%%%%%%%%%%%%%%%%%%%%
%%%%%%%%%%%%%%%%%%%%%%%%%%%%%%%%%%%%%%%%%%%%%%%
\section{The quasimap $I$-function}\label{sec:qmap}
%%%%%%%%%%%%%%%%%%%%%%%%%%%%%%%%%%%%%%%%%%%%%%
%%%%%%%%%%%%%%%%%%%%%%%%%%%%%%%%%%%%%%%%%%%%%%
The $I$-function of $Z \sslash G$ can be defined using the quasimap theory of Ciocan-Fontanine and Kim, developed in \cite{wcgis0}, \cite{stable_qmaps}, and \cite{toric_qmaps}. To compute the $I$-function, we only need to study genus-zero quasimaps to $Z\sslash G$ with no markings and one parameterized component, so we will restrict our review of quasimap theory to this situation.

\subsection{Quasimaps}
Fix for the duration of this paper a copy of the projective line with projective coordinates $[u:v]$ and denote it $\specialP$. Stable graph quasimaps are defined in \cite[Def~7.2.1]{stable_qmaps}; one easily sees that when $g=0$ and there are no marked points, that definition is equivalent to the following.
\begin{definition}\label{def:qdef3}
A \textit{stable graph quasimap} to $Z\sslash G$ over a base scheme $S$ is a tuple $(C,\P, \sigma,\bx)$ where
\begin{itemize}
\item $C \rightarrow S$ is a nodal genus-0 projective curve
\item $\P \rightarrow C$ is a principal $G$-bundle
\item $\sigma$ is a section of $\P \times_G Z$
\item $\bx: C \rightarrow \specialP_S$ is a morphism that restricts to an isomorphism on every geometric fiber over $S$.
\end{itemize}
Moreover, for every geometric point $s \in S$, the set of points $ p \in C_s$ such that $\sigma(p) \not \in Z^s_\theta$ must be finite.
\end{definition}
An isomorphism of stable graph quasimaps $(C, \P, \sigma, \bx)$ and $(C, \P', \sigma', \bx')$ on $S$ is a commuting diagram
\begin{equation}\label{fig:qgmorph}
\begin{tikzcd}
&\P' \arrow[r, "\sim"] \arrow[d] &\P\arrow[d] \\
\specialP & \arrow[l,"\bx'"'] \mathcal{C}'\arrow[r,"\sim","f"']&\mathcal{C} \arrow[ll, bend left, "\bx"]
\end{tikzcd}
\end{equation}
such that the square is fibered and $f^*\sigma = \sigma'$. Note that stability depends on the character $\theta$; since we have fixed $\theta$ once and for all in this paper, we will generally omit it from the notation.
The set $\{p \in C \mid \sigma(p) \not \in Z^s\}$ is called the \textit{base locus} of the quasimap. 

\begin{remark}\label{rmk:general-qmaps}
As defined in \cite[Sec~3.1]{stable_qmaps}, the data of a quasimap over a base scheme $S$ is a tuple $(C, \P, \sigma, p_i)$ where $C \rightarrow S$ is a nodal curve of genus $g$, the principal bundle $\P$ and section $\sigma$ are as in Definition \ref{def:qdef3}, and $p_i: S \rightarrow C$ are markings for $i=1, \ldots, n$. This data is required to satisfy the same nondegeneracy condition as in Definition \ref{def:qdef3}; namely, for every geometric point $s \in S$, the set of points $p \in C_s$ such that $\sigma(p) \not \in Z^s_\theta$ must be finite. 
\end{remark}

\begin{definition}
Because we work almost exclusively with the quasimaps in Definition \ref{def:qdef3}, we call them simply ``quasimaps.'' We will refer explicitly to Remark \ref{rmk:general-qmaps} when the more general notion arises.
\end{definition}

\begin{lemma}\label{lem:trivial-curve}
If $(C, \P, \sigma, \bx)$ is a quasimap, then $\bx$ is an isomorphism (so $C = \specialP_S$).
\end{lemma}
\begin{proof}The condition that the geometric fibers of $\bx$ are isomorphisms forces $\bx$ to be an isomorphism. One can show this directly, or one can note that $(C, \bx)$ defines a family in $\Mbar_{0,0}(\PP^1, 1)$. This latter moduli space is isomorphic to the Grassmannian $Gr(\PP^1, \PP^1)$ (according to \cite[5]{FuPa}), which is represented by a point. So its universal curve is trivial.
\end{proof}

\subsubsection{Quasimaps as maps to $[Z/G]$}\label{sec:mapstostack}
Given a principal $G$-bundle $\P \rightarrow C$ on a curve $C$ over $S$, giving a section $\sigma:C \rightarrow \P \times_G Z$ is equivalent to giving a $G$-equivariant morphism
\[
\tilde n: \P \rightarrow Z.
\]
which in turn defines a map $n: C \rightarrow [Z/G]$. Indeed, a morphism $\tilde n: \P \rightarrow Z$ is equivalent to a section of 
\[\P \times Z \rightarrow \P,\] 
and the quotient of this section by $G$ recovers $\sigma$. The morphism $\tilde n$ defines a stable quasimap if for every geometric fiber $C_s$ of $C \rightarrow S$, the set of points $p \in C_s$ such that $\tilde \sigma(\P_p) \not \subset Z^s_\theta$ is finite.
The correspondence between $\sigma$ and $\tilde n$ is made more precise using the moduli of sections in Section \ref{sec:qmapmoduli}. 

\subsubsection{Quasimaps in local coordinates}\label{sec:uv-qmaps}

We will often work with a certain class of quasimaps which we now describe. Let 
\[U_S := S \times \AA^1 \xrightarrow{(s, u) \mapsto (s, [u:1])} S \times \specialP \quad \quad \quad \quad V_S := S\times \AA^1\xrightarrow{(s, v) \mapsto (s, [1:v])} S \times \specialP\] 
be the distinguished open subsets of $\specialP_S = S\times \specialP$, with gluing morphism $\kappa: U_S\setminus(S\times\{0\}) \rightarrow V_S\setminus(S\times\{0\})$ given by $\kappa(s,u) = (s, u^{-1})$. Then any morphism $\tau:U_S \setminus \{S \times 0\} \rightarrow G$ defines an isomorphism of the restrictions of the trivial $G$-bundles $U_S \times G$ and $V_S \times G$ by sending \begin{equation}\label{eq:coord4}(u, g)\mapsto(\kappa(u), g\tau^{-1}(u))\quad \text{for}\quad u \in U_S \setminus\{S\times 0\}.\end{equation} 
We denote the resulting principal $G$-bundle on $\specialP_S$ by $\P_\tau$. In particular, any cocharacter $\tau$ of $G$ defines a principal $G$-bundle $\P_\tau$ on $\specialP$, or more generally on any $\specialP_S$ by pullback. If $\T$ is a principal $T$-bundle, then the map $\tau \mapsto \T_\tau$ is $W$-equivariant with respect to the actions defined in \eqref{eq:dual_actions} and \eqref{eq:weylonbunT}.

A quasimap $(\specialP_S, \P_\tau, \sigma, id)$ may be described as follows. Due to the convention in \eqref{eq:principal1}, the fiber bundle $\P_\tau \times_G Z$ is given by gluing the trivial bundles $U_S \times Z$ and $V_S \times Z$ via 
\[(u, z) \mapsto (\kappa(u), \tau(u)z)\quad \text{for}\quad u \in U_S \setminus\{S\times 0\}.\] So $\sigma$ is determined by the maps $\sigma_U: U_S \rightarrow Z$ and $\sigma_V: V_S \rightarrow Z$, where $\sigma_U$ is the composition
\[
U_S \xrightarrow{\sigma|_{U_S}} (\P_\tau \times_G Z)|_{U_S} = U_S \times Z \xrightarrow{pr_2} Z
\]
and $\sigma_V$ is defined similarly. Hence we have
\begin{equation}\label{eq:coord1}
\tau\cdot\sigma_U = \sigma_V\circ \kappa \quad \quad\text{on}\; U_S\setminus\{S \times 0\},
\end{equation}
and conversely a pair of morphisms $\sigma_U: U_S \rightarrow Z$ and $\sigma_V: V_S \rightarrow Z$ satisfying \eqref{eq:coord1} define a section of $\P_\tau \times_G Z$.
Two quasimaps $(\specialP_S, \P_\tau, \sigma, id)$ and $(\specialP_S, \P_\omega, \rho, id)$ are isomorphic if and only if there are functions $\phi_U: U_S \rightarrow G$ and $\phi_V: V_S \rightarrow G$ such that
\begin{equation}\label{eq:coordinateiso}\begin{gathered}
(\phi_V\circ \kappa)\tau = \omega\phi_U \quad \quad \text{as maps}\; U_S\setminus (S\times\{0\})\rightarrow G\\
\phi_U\cdot\sigma_U = \rho_U \quad \quad \text{as maps}\;U_S \rightarrow Z\\
\phi_V\cdot\sigma_V = \rho_V \quad \quad \text{as maps}\;V_S \rightarrow Z.\\
\end{gathered}\end{equation}

\begin{remark}If $k$ is an algebraically closed field, then $U=V=\AA^1_k$, and since every principal $G$-bundle is trivial on $\AA^1_k$ we see that any $k$-quasimap is isomorphic to one of the form $(\specialP_k, \P_\tau, \sigma, id)$ for some transition function $\tau$. 
\end{remark}

\subsubsection{Class}

Let $k$ be an algebraically closed field. The \textit{class} of a quasimap $(\specialP_k, \P, \sigma, \bx)$ is the homomorphism $\beta \in \Hom(\Pic^G(Z), \ZZ)$ given by
    \[
    \beta(\L) = \deg_{C}(\sigma^*(\P\times_G \L)) \quad \quad \L \in \Pic^G(Z).
    \]
A family of quasimaps $(\specialP_S, \P, \sigma, \bx)$ has class $\beta$ if each of its geometric fibers over $S$ has class $\beta$.
%The degree $\beta$ of $(\P,\sigma,\bx)$ restricts to a morphism $\beta_\P$ in $\Hom(\chi(G),\ZZ)$ given by
%\[
%\beta_\P(\xi) = \beta(X\times_G \CC_\xi),
%\]
%where $\CC_\xi$ denotes the 1-dimensional representation defined by $\xi$. Notice that $\beta_\P(\xi)=\deg_{\PP^1}(\P\times_G \CC_\xi)$, so that $\beta_\P$ is the usual degree of the principal bundle $\P$.
 
 Let $T \subset G$ be a maximal torus. From the morphisms $\chi(G) \rightarrow \chi(T)$ and $\Pic^G(Z) \rightarrow \Pic^T(Z)$ and the inclusion \eqref{eq:inclusion}, we have the following diagram, crucial for understanding how class works in the abelian-nonabelian correspondence:
\begin{equation}\label{eq:degree_diagram}
\begin{tikzcd}
\Hom(\Pic^T(Z),\ZZ)\arrow[r,"\rpic"]\arrow[d, "\bdtilde"]&\Hom(\Pic^G(Z), \ZZ)\arrow[d,"\bd"]\\
\Hom(\chi(T),\ZZ)\arrow[r, "\rchi"] & \Hom(\chi(G),\ZZ))
\end{tikzcd}
\end{equation}
The maps are all given by restriction of homomorphisms.

\begin{definition}\label{def:theta-effective}
We briefly consider general quasimaps as defined in Remark \ref{rmk:general-qmaps}, with nodal, marked, possibly disconnected source curves. The classes of all such quasimaps form a semigroup, called the $\theta$-effective classes of $(Z, G)$ (see \cite[Def~3.2.2]{stable_qmaps}).
\end{definition}
Because this paper focuses on computing small $I$-functions, it will be convenient to have a name for the subset of $\theta$-effective classes that show up in the expansion of that power series. These are precisely the $I$-effective classes defined below. In contrast with the $\theta$-effective classes, one can often compute the $I$-effective classes directly (see Section \ref{sec:example}).
\begin{definition}\label{def:effective}
The classes $\beta \in \Hom(\Pic^G(Z), \ZZ)$ that are realized as the class of some stable (graph) quasimap to $Z \sslash_\theta G$ are called the \textit{I-effective classes} of $(Z,G, \theta)$.
\end{definition}
\begin{remark}\label{rmk:effective-generation}
The $I$-effective classes generate the sub-semigroup of $\theta$-effective classes defined by source curves with genus 0. Indeed, let $(C, \P, \sigma, p_i)$ be a general quasimap of class $\beta$ as in Remark \ref{rmk:general-qmaps}, such that the genus of $C$ is zero. Let $C_1, \ldots, C_N$ denote the irreducible components of $C$. Observe that by choosing any parametrization $\bx_i$ of $C_i$ and forgetting the markings, we get stable (graph) quasimaps $(C_i, \P|_{C_i}, \sigma_{C_i}, \bx_i)$ to $Z \sslash_\theta G$, and the class $\beta_i$ of $(C_i, \P|_{C_i}, \sigma_{C_i}, \bx_i)$ does not depend on the choice of parametrization $\bx_i$. Moreover, for $\L \in \Pic^G(Z)$, we have
\[
\beta(\L) = \deg_C(\sigma^*(\P \times_G \L)) = \sum_{i=1}^N \deg_{C_i}(\sigma|_{C_i}^*(\P|_{C_i}\times_G \L)) = \sum_{i=1}^N \beta_i(\L).
\]
Hence the $\theta$-effective class $\beta$ is a sum of the $I$-effective classes $\beta_i$.
\end{remark}

Let ${QG}_{\beta}(Z\sslash G)$ denote the groupoid of stable class-$\beta$ quasimaps to $Z \sslash G$. We will denote it simply $QG_\beta$ when the target $Z\sslash G$ is understood. The space ${QG}_{\beta}$ is called a \textit{quasimap graph space} in analogy with Gromov-Witten theory, and it is equal to the space $\mathrm{Qmap}_{0,0}(Z\sslash G, \beta; \PP^1)$ from \cite{stable_qmaps}.

\begin{theorem}[{\cite[Theorem~7.2.2]{stable_qmaps}}]\label{thm:qg}
The moduli space ${QG}_{\beta}(Z \sslash G)$ is a separated Deligne-Mumford stack of finite type, proper over $\bS_G$.
\end{theorem}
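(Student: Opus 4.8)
The plan is to realize $QG_\beta$ as an open substack of an ambient algebraic stack of \emph{all} maps from prestable curves to $[V/G]$, and then to establish finite type, the unramified diagonal, separatedness, and the obstruction theory one at a time, each time feeding in the stability conditions as the essential input. A map to $[V/G]$ from a genus-zero prestable curve $C$ — carrying, in the graph-space setting, a degree-one map to $\PP^1$ that contracts the rational tails — is exactly a principal $G$-bundle $\P\to C$ together with a section $u$ of $\mathcal V := \P\times_G V$. The stack $\mathfrak B$ parameterizing such a triple (prestable curve with its $\PP^1$-structure, principal $G$-bundle) is a smooth Artin stack, locally of finite type: $\PP^1$ is convex, and the stack of principal $G$-bundles over a prestable curve is smooth since $G$ is smooth and reductive. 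Over $\mathfrak B$ the sections of $\mathcal V$ on the universal curve form an abelian cone, affine over $\mathfrak B$, so after fixing the locally constant degree $\beta$ the stack $\mathfrak M_\beta$ of all degree-$\beta$ maps to $[V/G]$ is algebraic and locally of finite type. Both stability conditions are open on $\mathfrak M_\beta$: the locus where $u$ sends each generic point of $C$ into $V^s$ is open because $V\setminus V^s$ is closed and $G$-invariant, and ampleness of $\P\times_G\CC_\theta$, i.e.\ positivity of the $\theta$-degree on every rational tail, is open once the base locus is finite. Hence $QG_\beta\subseteq\mathfrak M_\beta$ is open, so it is an algebraic stack.

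\textbf{Boundedness.} To get finite type, note ampleness forces every rational tail of $C$ to carry $\theta$-degree at least one, so with $\beta$ fixed both the number of components of $C$ and the total length of the base locus are bounded. On the distinguished $\PP^1$ the principal bundle $\P$ is rigidified by its $T$-degree (Grothendieck's classification), which ranges over a finite set determined by $\beta$, and likewise on each rational tail; consequently $\mathcal V = \P\times_G V$ runs through a bounded family of vector bundles of bounded degree on a bounded family of curves, and its sections $u$ then also form a bounded family. Assembling these bounds yields finite type.

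\textbf{The diagonal and separatedness.} An infinitesimal automorphism of $(\P,u)$ fixing $u$ must be trivial on the dense open of $C$ where $u$ lands in $V^s$ — where $G$ acts freely — and stability kills the remaining continuous automorphisms coming from contracted rational components, so $\mathrm{Aut}(\P,u)$ is finite and reduced and $QG_\beta$ is Deligne--Mumford. Separatedness I would obtain from the valuative criterion, and this is the step I expect to be the main obstacle. Given a DVR $R$ with fraction field $K$ and two families of stable quasimaps over $\mathrm{Spec}\,R$ that become isomorphic over $\mathrm{Spec}\,K$, one must show they are already isomorphic over $R$; the two central fibres differ by an elementary modification of the curve, bundle and section supported over the closed point, and the content is to show that no nontrivial such modification is compatible with stability — one cannot trade degree for a new rational tail, or for a base point, without violating ampleness or finiteness of the base locus. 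Making this case analysis precise (in particular handling elementary modifications of the $G$-bundle along a fibre, where one passes through the torus and Weyl-group bookkeeping recalled above) is the real work, carried out in \cite{stable_qmaps}.

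\textbf{The obstruction theory.} Relative to the smooth stack $\mathfrak B$, a quasimap is just the section $u$ of $\mathcal V$, with deformations $H^0(C,\mathcal V)$ and obstructions $H^1(C,\mathcal V)$; since $\mathcal V$ is a vector bundle on the curve $C$, the complex $(R\pi_* \mathcal V)^\vee$ is perfect of amplitude $[-1,0]$ and defines a relative obstruction theory for $QG_\beta\to\mathfrak B$ — equivalently $(R\pi_* u^* T_{[V/G]})^\vee$, reflecting that $T_{[V/G]}$ is the two-term complex $[V\otimes\mathcal O\to\g\otimes\mathcal O]$ in degrees $[0,1]$. Composing with the (smooth, hence obstruction-free) deformation theory of $\mathfrak B$ promotes this to an absolute perfect obstruction theory on $QG_\beta$ of amplitude $[-1,0]$, as constructed in \cite{stable_qmaps}.
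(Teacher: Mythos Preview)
The paper does not prove this theorem: it is quoted verbatim as \cite[Theorem~7.2.2]{stable_qmaps} and then the paper simply moves on to \emph{describe} (not derive) the perfect obstruction theory via the Euler sequence \eqref{eq:euler}. So there is no proof in the paper to compare your proposal against.

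That said, your sketch is a reasonable outline of the argument actually carried out in \cite{stable_qmaps}: realize the quasimap stack as an open substack of maps to $[V/G]$ over the Artin stack of bundles on prestable curves, use ampleness of $\P\times_G\CC_\theta$ to bound the combinatorics of the curve and hence get finite type, use free action on $V^s$ plus finiteness of the base locus to kill infinitesimal automorphisms, and build the obstruction theory from $R\pi_*$ of the associated vector bundle relative to the smooth base stack. One small correction: the tangent complex of $[V/G]$ sits in degrees $[-1,0]$, not $[0,1]$ --- it is the two-term complex $[\g\otimes\mathcal O\to V\otimes\mathcal O]$ with $\g$ in degree $-1$ (as in the Euler sequence \eqref{eq:taneuler}), and its dual $(R\pi_*\mathcal F)^\vee$ then has amplitude $[-1,0]$ as required. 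Also note that in the specific setting of this paper, $QG_\beta = Qmap_{0,0}(V\sslash G,\beta;\PP^1)$ is the graph space with no markings, so the source curve is literally $\PP^1$ with no rational tails; your discussion of bounding the number of components is relevant to the general quasimap spaces in \cite{stable_qmaps} but is vacuous here.
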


The following lemma is essentially proved in the proof of \cite[Theorem~7.2.2]{stable_qmaps}. Since we will use the statement, we explicitly extract its proof.

\begin{lemma}\label{lem:finite}
When $\rpic$ is restricted to $I$-effective classes in both the source and target, it has finite fibers.
\end{lemma}
\begin{proof}
By \cite[Prop~2.5.2]{stable_qmaps}, for any $Z$ there is a $G$-equivariant embedding $Z \subset V$ into a vector space $V$. The vector space $V$ may not satisfy the assumption $V^s = V^{ss}$, but for $\tilde \alpha \in \Hom(\Pic^G(V), \ZZ)$ one can still define the stack $QG_{\tilde \alpha}(V\sslash G)$, and it is shown in \cite[Thm~3.2.5]{stable_qmaps} that this stack is finite type. In fact, it is argued there that for classes that are effective for $(V, G, \theta)$, the map $\rchi$ has finite fibers. Since we may also assume $Z^s \subset V^s$, it follows that $\rchi$ has finite fibers for general $Z$.

Now, for $\tilde \alpha \in \Hom(\chi(T), \ZZ)$ let $\mathfrak{X}=\bigsqcup_{\tilde \beta\in\bdtilde^{-1}(\tilde \alpha)} QG_{\tilde \beta}(Z\sslash T)$. Then $\mathfrak{X}$ is a closed substack of $QG_{\tilde \alpha}(V\sslash T)$, hence finite type. On the other hand, the components $QG_{\tilde \beta}(Z\sslash T)$ are open in $\mathfrak{X}$ because they are loci where (flat) families of line bundles have specified degrees. Since $\mathfrak{X}$ has finite type, this shows that $\bdtilde$ also has finite fibers in effective classes.
\end{proof}

%%%%%%%%%%%%%%%%%%%%%%%%%%%%%%%%%%%%%%%%%%%%%
\subsection{Relation to the moduli of sections}\label{sec:qmapmoduli}
Let $QG(Z \sslash G)$ be the groupoid of stable quasimaps to $Z \sslash G$ (of any class). We describe it as a moduli of sections, using the language and notation of \cite[Appendix~A]{cjw}. This is important for our description of its obstruction theory.

Recall that if $C \rightarrow \fU$ is a family of curves and $Z \rightarrow C$ is an algebraic stack, then $\Sec{\fU}{Z}{C}$ is the groupoid whose fiber over $S \rightarrow \fU$ is $\Hom_C(C\times_{\fU}S, Z)$. Under mild assumptions, $\Sec{\fU}{Z}{C}$ is an algebraic stack \cite[Thm~1.3]{HR19} and it has a canonical obstruction theory \cite[Thm~2.1.2]{dissertation}.

\begin{example} Let $BG$ denote the global quotient $[pt/G]$. Then the stack $\Sec{pt}{\specialP \times BG}{\specialP}$ is the moduli of principal $G$-bundles on $\specialP$, which we will denote $\Bun_G$.
\end{example}

From the tower 
\begin{equation}\label{eq:tower}
\specialP \times [Z/G] \rightarrow \specialP \times BG \rightarrow \specialP \rightarrow pt
\end{equation}
we may construct the stack $\Sec{pt}{\specialP\times[Z/G]}{\specialP}$ which, when we define quasimaps as in Section \ref{sec:mapstostack}, contains $QG(Z\sslash G)$ as an open subset. This uses Lemma \ref{lem:trivial-curve}. On the other hand, the projection $\specialP \times [Z/G] \rightarrow \specialP \times BG$ induces a morphism \begin{equation}\label{eq:forget}
\Sec{pt}{\specialP \times [Z/G]}{\specialP} \rightarrow \Bun_G,
 \end{equation}
and by \cite[Lem~A.1.2]{cjw} there is a canonical isomorphism
\[
\Sec{pt}{\specialP \times [Z/G]}{\specialP} \simeq \Sec{\Bun_G}{\P \times_G Z}{\specialP_{\Bun_G}}
\]
where $\P$ denotes the universal principal bundle on $\specialP_{\Bun_G}$. Viewing $QG(Z\sslash G)$ as an open subset of $\Sec{\Bun_G}{\P \times_G Z}{\specialP_{\Bun_G}}$ produces the description of quasimaps in Definition \ref{def:qdef3}. We note that the restriction of the forgetful map \eqref{eq:forget} to $QG_\beta(Z\sslash G)$ is denoted $\mu$ in \cite[Sec~7.2]{stable_qmaps}, and the morphism $QG_\beta(Z\sslash G) \rightarrow \mathcal{M}_{0,0}(\PP^1, 1)=pt$ is denoted $\nu$.

\subsection{Perfect obstruction theory}\label{sec:pot}
Let $\pi: QG_\beta(Z\sslash G) \times \specialP \rightarrow QG_\beta(Z\sslash G)$ be the universal curve and $n: QG_\beta(Z\sslash G) \times \specialP \rightarrow [Z/G]$ the universal map. The perfect obstruction theory used in \cite[Sec~7.2]{stable_qmaps} and \cite[Sec~2.6]{wcgis0} is the canonical one coming from the inclusion $QG_\beta(Z\sslash G) \subset \Sec{\Bun_G}{\P\times_G Z}{\specialP_{\Bun_G}}$. This theory is a relative one over $\Bun_G$. We prefer to use the canonical obstruction theory coming from the inclusion $QG_\beta(Z\sslash G) \subset \Sec{pt}{\specialP \times [Z/G]}{\specialP}$ because this one is absolute. That is, we define
\begin{equation}\label{eq:pot}
\phi: \EE_{QG_\beta}:=R \pi_*n^*(\LL_{[Z/G]}\otimes \omega^\bullet) \rightarrow \LL_{QG_\beta}
\end{equation}
to be the absolute obstruction theory on $QG_\beta$, where $\phi$ is defined as in \cite[(18)]{cjw} (see also \cite[Sec~2.1.1]{dissertation}). We note that since the cotangent complex $\LL_{[Z/G]}$ is perfect, there is a canonical isomorphism
\[
\EE_{QG_\beta} \simeq (R\pi_*n^*\TT_{[Z/G]})^\vee
\]
given by \cite[(4.1)]{FHM} (it is an isomorphism by \cite[Thm~4.4]{FHM} and \cite[Prop~2.2.6]{dissertation}).
A priori, this $\phi$ may not agree with the $\nu$-relative theory defined in \cite[Sec~7.2]{stable_qmaps}, which is an absolute obstruction theory defined via a mapping cone construction to be compatible with the canonical $\Bun_G$-relative theory. 

\begin{lemma}\label{lem:pot}
The arrow \eqref{eq:pot} is an absolute perfect obstruction theory on $QG_\beta$ inducing the same virtual cycle as the one used in \cite{stable_qmaps}.
\end{lemma}
\begin{proof}
The argument in \cite[Sec~A.2.3]{cjw} shows that \eqref{eq:pot} is in fact compatible with the $\mu$-relative theory in \cite{stable_qmaps}, implying that $\phi$ is a \textit{perfect} obstruction theory and that it induces the same virtual cycle as the $\mu$- and $\nu$-relative theories in \cite{stable_qmaps}.
\end{proof}

\begin{remark}
In this paper we need an absolute obstruction theory on $QG_\beta$ in order to compute the virtual fundamental classes of the fixed loci in the definition of the $I$-function (Section \ref{sec:I}). The $\nu$-relative theory of \cite{stable_qmaps} is an absolute theory, but because it is defined as a non-canonical mapping cone, we were not able to show that it is functorial under abelianization.
\end{remark}

%%%%%%%%%%%%%%%%%%%%%%%%%%%%%%%%%%%%%%%%%%%%%%%
\subsection{I-function}\label{sec:I}
Let $\CC^*$ act on $\PP^1$
by
\begin{equation}\label{eq:action1}
\lambda \cdot [u:v] = [\lambda u:v], \quad \quad\quad\quad \lambda \in \CC^*.
\end{equation}
This induces an action on $QG_\beta$ via 
\begin{equation}\label{eq:action2}\lambda \cdot (C, \P, \sigma, \bx) = (C, \P,\sigma, \lambda \circ \bx).\end{equation}
Every quasimap is isomorphic to one with $\bx=id$. For these quasimaps, the action in \eqref{eq:action2} is equivalent to setting
\begin{equation}\label{eq:action3}
\lambda \cdot (\specialP_S, \P, \sigma, id) = (\specialP_S, (\lambda^{-1})^*\P,\sigma\circ \lambda^{-1}, id).
\end{equation}

% %LAMBDA ACTIon ON A n-QUASIMAP
% If $\sigma$ is equivalent to a map $n: \P \rightarrow Z$, then $\sigma \circ \lambda^{-1}$ is equivalent to the composition
% \[
% (\lambda^{-1})^*\P \xrightarrow{\lambda^{-1}_\P} \P \xrightarrow{n} Z
% \]
% where $\lambda^{-1}_\P$ is the canonical isomorphism lying over $\lambda^{-1}:\specialP \rightarrow \specialP.$\todo{not sure if this is well-defined. Also not sure if it is used any more. Maybe delete it.}

% For quasimaps of the form $(\specialP_k, \P_\tau, \sigma, id)$ we can write this action in another way. Observe that we have the diagram

% which implies 
% \begin{equation}\label{eq:specialaction}
% \lambda \cdot (\specialP_k, \P_\tau, \sigma, id) = (\specialP_k, \P_{\tau\circ \lambda^{-1}}, \sigma\circ \lambda^{-1}, id).
% \end{equation}
%Notice that we have a fibered diagram
%\[
%\begin{tikzcd}
%\P\times_G X \arrow[r,equals]\arrow[d,"\lambda^{-1}\circ \varrho"]& \P\times_G X \arrow[d,"\varrho"']\\
%\PP^1_S\arrow[u,bend left,"\sigma \circ \lambda"]\arrow[r,"\lambda"]& \PP^1_S\arrow[u,bend right,"\sigma"']
%\end{tikzcd}
%\]
%so that we may also use the family on the left, with coordinates $\bx$, to represent $\lambda \cdot(\P,\sigma,\bx)$.

In terms of the moduli of sections, the action described on $QG_\beta$ comes from the $\CC^*$-equivariant structure on the tower of morphisms \eqref{eq:tower} given by letting $\CC^*$ act on $\specialP$ via \eqref{eq:action1}. By \cite[Sec~A.3]{cjw} this equivariant tower induces $\CC^*$-actions on $QG_\beta$ and $C_{QG_\beta}$ making $\pi$ and $n$ equivariant. It also induces a canonical $\CC^*$-equivariant structure on the perfect obstruction theory \eqref{eq:pot}.

% Another (less concrete) description of this $\CC^*$ action on $QG_\beta$ explains why the obstruction theory \eqref{eq:pot} is equivariant. Let $\mathfrak{M}$ denote the moduli space of prestable genus-0 curves with no markings, and let $C_\mathfrak{M}$ be its universal curve. The stack $\MP$ is an open substack of the moduli of sections of $C_\mathfrak{M} \times \PP^1$, and so by  the action \eqref{eq:action1} of $\CC^*$ on $\PP^1$ induces one on $\MP$ and its universal curve---in fact this is precisely the action \eqref{eq:action2}. We get an equivariant commuting diagram
% \[
% \begin{tikzcd}
% & C_\MP \times[X/G]\arrow[d]\\
% C_{QG_\beta} \arrow[r] \arrow[ur, "u"] \arrow[d, "\pi"] & C_\MP\arrow[d] \\
% QG_\beta \arrow[r] & \MP
% \end{tikzcd}
% \]It follows from that the obstruction theories \eqref{eq:nupot} and \eqref{eq:pot} are equivariant.

We define the \textit{fixed locus} of $QG_\beta$ under the $\CC^*$-action as in \cite[Sec~3]{CKL17}. Its closed points are geometric quasimaps $(\specialP_k, \P, \sigma, \bx)$ such that $\lambda \cdot (\specialP_k, \P, \sigma, \bx)$ is isomorphic to $(\specialP_k, \P, \sigma, \bx)$ for every $\lambda \in \CC^*$ (see eg \cite[Prop~5.23]{AHR}). The $I$-function of $Z\sslash G$ is defined in terms of localization residues at certain fixed loci (see \cite[Sec~7.3]{stable_qmaps}). Observe that a fixed quasimap must have all its base points at $[0:1]$ or $[1:0]$, and that the resulting map 
\begin{equation}\label{eq:constant}
\PP^1\setminus\{[1:0], [0:1]\} \rightarrow Z \sslash G
\end{equation}
must be constant. Let $F_{\beta}(Z\sslash G)$ denote the component of the fixed locus of $QG_{\beta}(Z\sslash G)$ corresponding to quasimaps that have a unique basepoint at $[0:1]$ (it may not be connected). We will omit the space $Z\sslash G$ from the notation when there is no danger of confusion. Let $ev_{\bullet}:F_{\beta} \rightarrow Z\sslash G$ send a quasimap to the point in $Z\sslash G$ that is the image of the constant map \eqref{eq:constant}. Then we can define the $I$-function of $Z\sslash_{\theta}G$ as a formal power series in the $q$-adic completion of the semigroup ring generated by the semigroup of $\theta$-effective classes.

\begin{definition}
The (small) $I$-function of $Z\sslash_{\theta} G$ is
\begin{equation}\label{eq:Ifunc}
I^{Z\sslash G}(z) = 1+ \sum_{\beta\neq 0}q^\beta I^{Z\sslash G}_\beta(z) \quad \quad \text{where} \quad \quad I^{Z\sslash G}_\beta(z) = (ev_{\bullet})_*\left(\frac{[F_\beta(Z\sslash G)]^{vir}}{e_{\CC^*}(N^{vir}_{F_{\beta}(Z\sslash G)})}\right)
\end{equation}
and the sum is over all $I$-effective classes of $(Z, G, \theta)$.
  \end{definition}
\begin{remark}
In \eqref{eq:Ifunc} it is equivalent to sum over all $\theta$-effective classes.
\end{remark}

\subsection{Interpreting the main theorem}\label{sec:interpret}
We discuss how to interpret the right hand side of \eqref{eq:main}. 
\begin{lemma} The series
\begin{equation}\label{eq:rhs}
\sum_{\tilde \beta \rightarrow \beta} \left( \prod_{\rho} \frac{\prod_{k=-\infty}^{\tilde\beta(\rho)}(c_1(\L_{\rho}) + kz)}{\prod_{k=-\infty}^0 (c_1(\L_{\rho}) + kz)}\right)I^{Z\sslash T}_{\tilde \beta}(z)\end{equation}
appearing on the right hand side of \eqref{eq:main} is invariant under the action of $W$.
\end{lemma}
\begin{proof}
We first claim that for $w \in W$, we have $(w^{-1})^*I^{Z\sslash T}_{\tilde \beta}(z) = I^{Z\sslash T}_{w\tilde \beta}(z).$ To see this, let $QG(Z\sslash T)$ denote the moduli of quasimaps to $Z\sslash T$ (of any class). The $W$-action on $[Z/T]$ induces an action on $QG(Z\sslash T)$ that is compatible with the action on classes and makes $ev_\bullet$ $W$-equivariant (see Lemma \ref{lem:weyl_action}). Hence we have
\[(w^{-1})^*(ev_\bullet)_*=(ev_\bullet)_*(w^{-1})^*\]
where the evaluation map on the left hand side is for $F_{\tilde \beta}(Z\sslash T)$ and on the right hand side it is for $F_{w\tilde \beta}(Z\sslash T).$
(The situation in \eqref{eq:twosquares} is almost identical).

Moreover, by \cite[Sec~A.3]{cjw}, the obstruction theory on $QG(Z\sslash T)$ is $W$-equivariant. This means that $(w^{-1})^*\EE_{QG}$ is canonically isomorphic to $\EE_{QG}$, and in particular they have the same localization residue on $F_{w\tilde \beta}(Z\sslash T)$. 

From here, a straightforward computation using the formulas in Section \ref{sec:weylaction} completes the proof.
\end{proof}

Because of this lemma, we may interpret the $W$-invariant coefficients of \eqref{eq:rhs} as elements of $A_*(Z\sslash G)_\QQ$ using Proposition \ref{prop:chow}. 

However, the reader may still wonder about the denominators that arise when $\tilde \beta(\rho)<0$. In fact, if we fix a set of positive roots $R^+$ with respect to $T$, then the class $\Delta = \prod_{\rho \in R^+} c_1(\L_\rho)$ appears in the denominator of every summand of \eqref{eq:rhs}. This class is the fundamental $W$-anti-invariant class, and it plays a role analogous to the Vandermonde determinant for symmetric functions, namely, every $W$ anti-invariant class is divisible by $\Delta$, and the quotient is $W$-invariant. These facts are explained in \cite[Sec~1]{ellingsrud}.

%%%%%%%%%%%%%%%%%%%%%%%%%%%%%%%%%%%%%%%%%%%%%%%
%%%%%%%%%%%%%%%%%%%%%%%%%%%%%%%%%%%%%%%%%%%%%%%
%%%%%%%%%%%%%%%%%%%%%%%%%%%%%%%%%%%%%%%%%%%%%%%
\section{Relate the fixed loci and evaluation maps}
%%%%%%%%%%%%%%%%%%%%%%%%%%%%%%%%%%%%%%%%%%%%%%%
%%%%%%%%%%%%%%%%%%%%%%%%%%%%%%%%%%%%%%%%%%%%%%%
%%%%%%%%%%%%%%%%%%%%%%%%%%%%%%%%%%%%%%%%%%%%%%%
The goal of this section is to ``pull back'' diagram \eqref{eq:key_diagram} to the $\CC^*$-fixed loci in the quasimap moduli spaces. That is, we prove the following.

%\begin{proposition}
%The connected components of $F_\beta$ are indexed by the isomorphism types of principal bundles appearing in quasimaps of degree $\beta$. For any $\theta$-effective class $\tilde \beta \in \Hom(\Pic^T(X),\ZZ)$, there is a parabolic subgroup $P_{\tilde \beta} \subset G$ and morphisms fitting in the following diagram:
%\begin{equation}
%\begin{tikzcd}
%F_{\tilde \beta_\chi}(G) \arrow[d,hook]  & F_{\tilde \beta}(T) \cap X^s(G) \arrow[l,"\psi"] \arrow[d,hook]\arrow[r,hook,"\phi"]& F_{\tilde \beta}(T)\arrow[d,hook, "ev_\bullet"]\\
%X^s(G)/P_{\tilde \beta}  \arrow[dr, "f"]& X^s(G)/T \arrow[r,hook,"j"]\arrow[d, "g"]\arrow[l,"p"]& X^s(T)/T\\
%& X^s(G)/G
%\end{tikzcd}
%\end{equation}
%Here, the two squares are fibered and the vertical arrows in the top row are all closed embeddings, the composition $f \circ i$ is the evaluation map $ev_\bullet$, and $F_{\tilde \beta_\chi}$ is the component of $F_\beta$ determined by the bundle type $\tilde \beta_\chi = \tilde \beta |_{\chi(T)}.$
%\end{proposition}
%[idea: closed points of $F_{\tilde \beta_\chi}$ have a well-defined bundle type]

\begin{proposition}\label{prop:diagram}
Let $\beta \in \Hom(\Pic^G(Z),\ZZ)$ be $I$-effective. For every $\tilde \beta \in \rpic^{-1}(\beta)$, there is
\begin{itemize}
\item a parabolic subgroup $P_{\bdtilde(\tilde \beta)}\subset G$, and
\item a morphism $\psi_{\tilde \beta}: F_{\tilde \beta}(Z\sslash T) \cap Z^s(G) \rightarrow F_\beta(Z\sslash G)$ whose image we denote $F_{\tilde \beta}(Z\sslash G)$,
\end{itemize}
fitting into the following commutative diagram:
\begin{equation}\label{eq:big_diagram}
\begin{tikzcd}
 F_{\tilde \beta}(Z\sslash G) \arrow[d,hook,"i"]  & F_{\tilde \beta}(Z\sslash T) \cap Z^s(G) \arrow[l,"\psi_{\tilde \beta}"'] \arrow[d,hook]\arrow[r,hook,"h"]& F_{\tilde \beta}(Z\sslash T)\arrow[d,hook, "ev_\bullet"]\\
Z^s(G)/P_{\bdtilde(\tilde \beta)}  \arrow[dr, "f"]& Z^s(G)/T \arrow[r,hook,"j"]\arrow[d, "g"]\arrow[l,"p"]& Z^s(T)/T\\
& Z^s(G)/G
\end{tikzcd}
\end{equation}
Here, the two squares are fibered, the vertical arrows in the top row are all closed embeddings, and the composition $f\circ i$ is the evaluation map $ev_\bullet$.
\end{proposition}

%Notice that the components of the disjoint union $F_\beta=\sqcup_i F_{W\alpha_i}$ are indexed by the isomorphism type of the principal $G$-bundle.

\subsection{Preliminaries, including definition of $P_{\bdtilde(\tilde \beta)}$}
We may identify cocharacters with dual characters of $T$ as follows. A dual character $\tilde \alpha \in \Hom(\chi(T),\ZZ)$ determines a cocharacter $\tau_{\tilde \alpha}$ via the rule
\begin{equation}\label{eq:dual}
\xi(\tau_{\tilde \alpha}(t)) = t^{-\tilde \alpha(\xi)} \quad\quad \text{for any}\quad \quad\xi \in \chi(T).
\end{equation}
The negative sign in the exponential appears so that for $\xi \in \chi(T)$ we have \[\deg_{\PP^1}(\T_{\tau_{\tilde \alpha}} \times_T \CC_\xi) = \tilde \alpha(\xi)\] (so in particular, if $Z$ is a vector space, a quasimap to $Z\sslash T$ with principal bundle $\T_{\tau_{\tilde \alpha}}$ has class $\tilde \alpha$). One can check that this identification of cocharacters and dual characters is $W$-equivariant under the actions defined in \eqref{eq:dual_actions}. To lighten the notation we will write $\T_{\tilde \alpha}$ for $\T_{\tau_{\tilde \alpha}}$ and $\P_{\tilde \alpha}$ for the associated principal $G$-bundle (which is equal to $\P_{\tau_{\tilde \alpha}}$).

\begin{remark}\label{rmk:bundle}
Let $k$ be an algebraically closed field. By the classification of principal bundles on $\PP^1_k$, every $k$-point of $QG_\beta$ is represented by a quasimap of the form $(\PP^1_k, \P_{\tilde \alpha}, \sigma, id)$ where $\rchi(\tilde \alpha) = \bd(\beta)$ (see Theorem \ref{lem:bundle_abnonab} and Remark \ref{rmk:bundles}).
\end{remark}

The construction of the parabolic subgroup $P_{\bdtilde(\tilde \beta)}$ uses the ``dynamic method'' (see e.g. \cite[Sec~2.1]{cgp10}). If $\tilde \alpha = \bdtilde(\tilde \beta)$ is a dual character and $\tau_{\tilde \alpha}$ the cocharacter defined in \eqref{eq:dual}, then the dynamic method defines a parabolic subgroup whose points over a $G$-scheme $S$ are
\begin{equation}\label{eq:defP}
P_{\tilde \alpha}(S) = \{ g \in G(S) \mid \lim_{t \rightarrow 0} \tau_{\tilde \alpha}(t)^{-1} g \tau_{\tilde \alpha}(t)\;\text{exists in}\;G\}.
\end{equation}
By ``the limit exists in $G$'' we mean that there is a dotted arrow making the following diagram of $S$-schemes commute:
\begin{equation}\label{eq:dynamic}
\begin{tikzcd}
\CC^*_S \arrow[d, hook] \arrow[r, "\tau_{\tilde \alpha}(t)^{-1}g\tau_{\tilde \alpha}(t)"] & [45pt]G_S\\
\AA^1_S \arrow[ur, dashrightarrow]
\end{tikzcd}
\end{equation}
By considering the case when $S$ is affine (so all schemes in the diagram are also affine), one sees that the dotted arrow is unique if it exists.
The subgroup $P_{\tilde \alpha}$ clearly contains $T$. 
\begin{remark}\label{rmk:parabolic-auts}The group $P_{\tilde \alpha}$ has a natural inclusion into $\Aut(\P_{\tilde \alpha})\subset \Aut(\P_{\tilde \alpha}\times_G Z)$, given by sending $g\in P_{\tilde \alpha}$ to the automorphism defined as in \eqref{eq:coordinateiso} by setting $\phi_V(v)=g$ and setting $\phi_U$ to be the unique dotted arrow in \eqref{eq:dynamic}.
\end{remark}
The dynamic method also produces a canonical Levi subgroup $L_{\tilde \alpha} \subset P_{\tilde \alpha}$, equal to the centrilizer of $\tau_{\tilde \alpha}$:
\[
L_{\tilde \alpha} = \{g \in G \mid \tau_{\tilde \alpha}(t)^{-1}g\tau_{\tilde \alpha}(t) = g\} \]
In fact this is the unique Levi sugbroup of $P_{\tilde \alpha}$ containing $T$ (see \cite[Prop~12.3.1]{confeng}).

We close this section with some properties of $F_\beta(Z\sslash G)$.

\begin{lemma}\label{lem:properties}
The stack $F_{\beta}(Z\sslash G)$ is represented by a separated algebraic space, and it is proper over $\bS_G$.
\end{lemma}
\begin{proof}
From the definition of torus fixed loci in \cite[Sec~3]{CKL17} we see that $F_\beta(Z\sslash G)$ is a closed substack of $QG_\beta$, hence proper and separated by Theorem \ref{thm:qg}.

To see that its automorphism groups are trivial, let $(\specialP_S, \P, \sigma, id)$ be a quasimap in $F_\beta$ over a scheme $S$, and let $\phi$ be an automorphism of it, i.e., $\phi$ is an automorphism of $\P$ such that the induced automorphism of $\P \times_G Z$ fixes $\sigma$. If $U \rightarrow \specialP_S$ is an \'etale chart where $\P$ is trivial, then $\sigma$ is given by a map $\sigma_U: U \rightarrow Z$ and $\phi$ is given by $\phi_U:U \rightarrow G$, and these data satisfy
\[
\phi_U(u)\sigma_U(u)=\sigma_U(u)
\]
for each $u \in U$. This means $\phi_U(u)$ is in the stabilizer $G_{\sigma_U(u)}$. Because the quasimap is stable, the group $G_{\sigma_U(u)}$ is trivial on an open subset of $U$. Hence $\phi_U$ is the identity, and $\phi$ is trivial.
\end{proof}

\subsection{Abelian case}
The goal of this section is to prove the following.

\begin{lemma}\label{lem:abelian}
The map $ev_\bullet: F_{\tilde \beta}(Z\sslash T) \rightarrow Z^s(T)/T$ is a closed embedding.
\end{lemma}

At the end of the section, we use Lemma \ref{lem:abelian} to describe the universal family on $F_{\tilde \beta}(Z\sslash G)$ (Proposition \ref{prop:ufam}).
The results in this section are related to those in \cite[Sec~5.2]{orb-qmaps}.
We begin with three lemmas, the first two of which are probably standard.

\begin{lemma}\label{lem:bundle-extensions}
Let $S \hookrightarrow S'$ be a square-zero extension of schemes. If $\P_1$ and $\P_2$ are two principal $G$-bundles on $\specialP_{S'}$ such that $\P_1|_{\specialP_S} \simeq \P_2|_{\specialP_S}$, then $\P_1 \simeq \P_2$.
\end{lemma}
\begin{proof}
This follows from \cite[Thm~1.5]{olsson-deftheory} and the fact that $\LL_{[\bullet/G]}$ is represented by a vector bundle in degree 1.
\end{proof}

\begin{lemma}\label{lem:uni-inv}
Let $X$, $Y$ be algebraic spaces over $\CC$, locally of finite type. Let $\pi: X \rightarrow Y$ be a separated morphism that is injective on $\CC$-points. Then $\pi$ is universally injective.
\end{lemma}
\begin{proof}
We show that the diagonal $\Delta_{\pi}: X \rightarrow X \times_YX$ is surjective. Because $\pi$ is separated, $\Delta_{\pi}$ is closed, so the complement of the image $|\Delta_{\pi}|^C$ is an open subset of $|X \times_YX|$ and by \cite[Tag~06G2]{tag} it contains a $\CC$-point if it is nonempty. So it suffices to show that $\Delta_{\pi}$ is surjective on $\CC$-points. But if $(x_1, x_2) \in (X\times_Y X)(\CC)$, then $\pi(X_1)=\pi(x_2)$ so $x_1=x_2$. So $(x_1, x_2) = \Delta_{\pi}(x_1, x_1)$ as desired.
\end{proof}

We will use the description of a quasimap as a tuple $(\specialP_S, \T, \tilde n, id)$ where $\tilde n: \T \rightarrow Z$ is a $T$-equivariant map. Let $\star=[1:1]$ in $U \cap V \subset \specialP_\CC$ and let $\iota_\star: S \rightarrow \specialP_S$ be the section with constant image $\star$, with $\T_S := \iota_\star^*\T$. Then $ev_\bullet$ is represented by the map $ev_\bullet: F_\beta(Z \sslash G) \rightarrow [Z^s(T)/T]$ that sends $q_i$ to the map $S \rightarrow [Z^s(T)/T]$ given by  
\begin{equation}\label{eq:injective0}
\T_S \rightarrow \T \xrightarrow{\tilde n} Z.
\end{equation}

\begin{lemma}\label{lem:injective1}
If $q=(\specialP_S, \T, \sigma, id)$ is an $S$-point of $F_{\tilde \beta}(Z\sslash T)$, then we have a commuting diagram as below, with the square fibered:
\[
\begin{tikzcd}
\T|_{V\times S} \arrow[r] \arrow[d] \arrow[rr, bend left, "\tilde n"]& \T_S \arrow[d] \arrow[r, "\text{\eqref{eq:injective0}}"'] & Z\\
V \times S \arrow[r] & S
% \end{tikzcd} \quad \quad 
% \begin{tikzcd}
% \T_{V\times S} \times_T Z \arrow[r] & \T_S \times_T Z\\
% V\times S \arrow[u, "\sigma_V"] \arrow[r] & S \arrow[u, "\sigma_S"]
\end{tikzcd}
\]
\end{lemma}

\begin{proof}
If we replace $V$ with $\CC^* \subset V$, this is just a restatement of the fact that the morphism $n:\specialP_S \rightarrow [Z/T]$ defined by $q$ factors through the quotient $\specialP_S \rightarrow [\specialP_S/\CC^*]$, and hence $n|_{\CC^*_S}$ is equivalent (2-isomorphic) to $\CC^*_S \rightarrow S \xrightarrow{n|_\star} [Z/T]$. Hence $n$ and $V_S \rightarrow S \xrightarrow{n|_\star} [Z/T]$ agree on the open subset $\CC^*_S \subset V_S$, so since $n|_{V_S}$ factors through the separated substack $Z\sslash T \subset[Z/T]$, they agree on all of $V_S$. This translates to the desired diagram.
\end{proof}

\begin{lemma}\label{lem:injective}
Let $q_i = (\specialP_S, \T, \sigma_i, id)$ for $i=1, 2$ be two quasimaps in $F_{\tilde \beta}(Z\sslash T)$ with the same base $S$ and principal $T$-bundle $\T$. If $ev_\bullet(q_1)=ev_\bullet(q_2)$, then $q_1\simeq q_2$.
\end{lemma}
\begin{proof}
Suppose we have two quasimaps $q_i = (\specialP_S, \T, \tilde n_i, id)$ for $i=1, 2$ with $ev_\bullet(q_1)=ev_\bullet(q_2)$. Then we have an automorphism of $\T_S$ sending $\tilde n_1|_{\T_S}$ to $\tilde n_2|_{\T_S}$. This automorphism is given by a morphism $\phi: S \rightarrow T$ (see for example \cite[Prop~2.11]{balaji}). Then the composition $\specialP_S \rightarrow S \rightarrow T$ defines an element of $T(\specialP_S)$, hence an automorphism $\Phi$ of $\T$. It remains to check that $\tilde n_1\circ \Phi$ and $\tilde n_2$ define the same map from $\T$ to $Z$, given that their restrictions to $\T_S$ agree. 
Lemma \ref{lem:injective1} implies that $\tilde n_1 \circ \Phi$ and $\tilde n_2$ agree on the open subset $\T|_{V\times S} \subset \T$. Since $Z$ is separated and the complement of $\T|_{V\times S}$ in $\T$ is an effective Cartier divisor, they agree on all of $\T$ (see \cite[10.2.G]{vakil}).
\end{proof}

\begin{proof}[Proof of Lemma \ref{lem:abelian}]
By Lemma \ref{lem:properties}, $F_{\tilde \beta}(Z\sslash T)$ is an algebraic space, so we can use \cite[Tag 05W8]{tag}: it is enough to show that $ev_\bullet$ is proper, formally unramified, and universally injective.

The map $ev_\bullet$ is proper because $F_{\tilde \beta}(Z\sslash T)$ is proper over $\bS_T$ and $Z\sslash T$ is separated. It is universally injective by Remark \ref{rmk:bundle} and Lemma \ref{lem:uni-inv}. Finally, to check that it is formally unramified, let $S \hookrightarrow S'$ be a square-zero extension of schemes fitting into a solid diagram
\[
\begin{tikzcd}
S \arrow[r, "q"] \arrow[d, hook] & F_{\tilde\beta}(Z\sslash T) \arrow[d, "ev_\bullet"] \\
S' \arrow[r] \arrow[ur, dashrightarrow] & Z\sslash T
\end{tikzcd}
\]
Suppose we have two dotted arrows $q_1, q_2$ making the diagram above commute. The arrows $q_1$, and $q_2$ define principal $T$-bundles $\specialP_{S'}$ that agree after restriction to $\specialP_S$, so by Lemma \ref{lem:bundle-extensions} the two principal bundles on $\specialP_{S'}$ are isomorphic. Now Lemma \ref{lem:injective} shows $q_1=q_2$.
\end{proof}

Define
\[
Z_{\tilde \beta} = Z^s(T) \times_{Z\sslash T} F_{\tilde\beta}(Z\sslash T).
\]
Observe that $Z_{\tilde\beta}\rightarrow F_{\tilde\beta}(Z\sslash T)$ is a principal $T$-bundle (meaning it is represented by such) with a $T$-equivariant map to $Z^s(T)$ (this map is a closed embedding by Lemma \ref{lem:abelian}), and in fact this data defines the evaluation morphism $ev_\bullet: F_{\tilde\beta}(Z\sslash T) \rightarrow [Z^s(T)/T]$ (see \eqref{eq:injective0}). Let
\[
\tilde\alpha = \bdtilde(\tilde\beta).
\]

\begin{proposition}\label{prop:ufam}
The universal family on $F_{\tilde\beta}(Z\sslash T)$ has fiber bundle $\cZ$ on $\specialP_{F_{\tilde\beta}(Z\sslash T)}$ and section $\csigma$ defined as follows:
\begin{equation}\label{eq:ufam1}
\cZ = \frac{Z_{\tilde\beta} \times (\CC^2\setminus\{0\}) \times Z}{(x, \bu, y) \sim (tx, s\bu, \tau_{\tilde \alpha}(s)^{-1}ty)} \quad \quad \csigma(x,\bu) = (x, \bu, \tau_{\tilde \alpha}(u)^{-1}x)
\end{equation}
where $(x,\bu,y) \in Z_{\tilde\beta} \times (\CC^2\setminus \{0\}) \times X$ with $\bu=(u, v)$ and $(t, s) \in T \times \CC^*$.
\end{proposition}

\begin{remark}\label{rmk:def-csigma}
The section $\csigma$ is a priori defined only for $u \neq 0$, but we will see in the proof of Proposition \ref{prop:ufam} that it has a unique extension over all of $\specialP_{F_{\tilde\beta}(Z\sslash T)}$.
\end{remark}

\begin{remark}\label{rmk:tfam}
We will often use the tautological family on $Z_{\tilde \beta}$ that is the pullback of \eqref{eq:ufam1}. It is given by the same formulas as in \eqref{eq:ufam1} but without dividing by the $T$-action.
The benefit of studying this family on $Z_{\tilde\beta}$ is that it is of the form in Section \ref{sec:uv-qmaps}: its underlying principal bundle is $\T_{\tilde\alpha}$, as can be shown by computing its transition function, and we have
\begin{align*}
\csigma_U: \AA^1\times Z_{\tilde\beta} &\rightarrow Z & \csigma_V: \AA^1\times Z_{\tilde\beta} &\rightarrow Z\\
(u,z) &\mapsto \tau_{\tilde\alpha}(u)^{-1} z& (v,z) &\mapsto z
\end{align*}
where $\csigma_U$ is defined as explained in Remark \ref{rmk:def-csigma}.
\end{remark}
\begin{proof}[Proof of Proposition \ref{prop:ufam}]
We show that the tautological family on $Z_{\tilde\beta}$ is as described in Remark \ref{rmk:tfam}. Then \eqref{eq:ufam1} defines a family of fixed quasimaps on $F_{\tilde \beta}(Z\sslash T)$ that is sent by $ev_\bullet$ to the inclusion $[Z_{\tilde \beta}/T] \subset [Z^s(T)/T]$. Since $ev_\bullet$ is a closed embedding, \eqref{eq:ufam1} must be the universal family.

Let $(\specialP_{Z_{\tilde \beta}}, \T, \sigma, id)$ be the tautological family. We first show that $\T$ is isomorphic to $\T_{\tilde \alpha}$. Using the description of the evaluation map $ev_\bullet$ in the proof of Lemma \ref{lem:injective}, we see that $\T_{Z_{\tilde \beta}} = \iota_\star^*\T$ is canonically isomorphic to $Z_{\tilde \beta} \times_{F_{\tilde \beta}(Z\sslash T)} Z_{\tilde \beta}$, with the map to the base $Z_{\tilde \beta}$ equal to one of the projections. This principal $T$-bundle is trivial (it has the diagonal section). From Lemma \ref{lem:injective1} we see that $\T|_{\CC^*\times{Z_{\tilde \beta}}}$ is also trivial.

We show that $\T|_{U\times Z_{\tilde \beta}}$ and $\T|_{V\times Z_{\tilde \beta}}$ are trivial. Let $\{S_i\} \rightarrow Z_{\tilde \beta}$ be an affine open cover. If $S_i^{red}$ is the reduced subscheme of $S_i$, then since $S_i$ is Noetherian, the containment $S_i^{red} \subset S_i$ may be factored as a finite sequence of square-zero extensions. So by Lemma \ref{lem:bundle-extensions} the restriction map $\Pic(S_i\times U) \rightarrow \Pic(S_i^{red} \times U)$ is injectve. Since $S_i^{red} \times U$ is a reduced Noetherian affine scheme, by \cite{Isch} the restriction map $\Pic(S_i^{red} \times U) \rightarrow \Pic(S_i^{red} \times \CC^*)$ is also injective. This implies that the restriction $\Pic(S_i \times U) \rightarrow \Pic(S_i \times \CC^*)$ is injective. Since $\T|_{\CC^*\times S_i}$ is trivial, this implies $\T|_{U \times S_i}$ is trivial.
On the other hand, the transition function for $\T$ on $\CC^*\times (S_i \cap S_j)$ is constant and equal to the identity, since $\T_{\CC^*\times Z_{\tilde \beta}}$ is trivial. So the transition function for $\T$ on each $U \times (S_i \cap S_j)$ is trivial, and we conclude that $\T|_{U \times Z_{\tilde \beta}}$ is trivial. Likewise $\T|_{V\times Z_{\tilde \beta}}$ is trivial.

Finally we compute the transition function $\tau: \CC^*_{Z_{\tilde \beta}} \rightarrow T$ satisfying \eqref{eq:coord4} for $\T$. The morphism $\tau$ is given by a ring map $\Gamma(T, \OO_T) \rightarrow \Gamma(\CC^*_{Z_{\tilde \beta}}, \OO_{\CC^*\times Z_{\tilde \beta}})$. If we choose a basis $\xi_1, \ldots, \xi_N$ of characters of $T$, then $\tau$ is determined by a collection $p_1, \ldots, p_N$ of invertible elements of $\Gamma(Z_{\tilde \beta}, \OO_{Z_{\tilde \beta}})[u, u^{-1}]$ ($p_j$ is the image of $\xi_j$). Since geometric fibers of $\T$ have class $\tilde\alpha$, the restriction of $p_j$ to every geometric fiber is a monomial of degree $-\tilde\alpha(\xi_j)$. So the restriction of $p_j$ to $Z_{\tilde \beta}^{red}$ is also a monomial of degree $-\tilde\alpha(\xi_j)$. Changing the trivialization on $V_{Z_{\tilde \beta}}$ by the appropriate element of $T$ and recalling the relationship \eqref{eq:dual}, we can assume $\tau|_{Z_{\tilde \beta}^{red}} = \tau_{\tilde\alpha}$. By Lemma \ref{lem:bundle-extensions} the bundle $\T|_{\PP^1_{Z_{\tilde \beta}}}$ is also isomorphic to $\T_{\tilde\alpha}$ as claimed.

The second step is to show that the tautological section $\csigma$ is given by the formulae for $\csigma_U$ and $\csigma_V$ in Remark \ref{rmk:tfam}. Because evaluation is tautological, $\csigma_V$ sends $(1,z)$ to $z$. By Lemma \ref{lem:injective1}, the function $\sigma_V$ is pulled back from this fiber. A priori the pullback map is not unique, and hence $\sigma_V$ may not be completely determined; however, any two choices for $\sigma_V$ would differ by an element of $T(V_{Z_{\tilde \beta}})$, which is given by a collection of invertible elements in $\Gamma(Z_{\tilde \beta}, \OO_{Z_{\tilde \beta}})[v]$. Since these are constant with respect to $v$, we see that the only option for $\csigma_V$ is the map $\csigma_V(v, z) = z$. Then by \eqref{eq:coord1} we see that $\sigma_U(u, z) = \tau_{\tilde \alpha}(u)^{-1}z$ for $u \neq 0$, and in particular this map has an extension to all of $U_{Z_{\tilde \beta}}.$ Uniqueness of the extension may be checked affine locally on $Z_{\tilde \beta}$, since the ring map $\Gamma(U \times \Spec(A), \OO_{U \times \Spec(A)}) \rightarrow \Gamma(\CC^*\times \Spec(A), \OO_{\CC^*\times\Spec(A)})$ is injective.
\end{proof}

\subsection{Proof of Proposition \ref{prop:diagram}}
Let 
\[
\begin{gathered}F^0_{\tilde \beta}(Z \sslash T) := \Big(F_{\tilde \beta}(Z\sslash T) \cap Z^s(G)/T\Big) \subset Z^s(T)/T,\\
Z_{\tilde \beta}^0 := \Big(Z_{\tilde \beta} \cap Z^s(G) \Big)\subset Z^s(T)
\end{gathered}\] 
so $F^0_{\tilde \beta}(Z \sslash T)$ is the open substack of $F_{\tilde \beta}(Z\sslash T)$ where $ev_\bullet$ lands in $Z^s(G)/T$ and $Z_{\tilde \beta}^0$ is the natural $T$-torsor on it. 

\begin{lemma}\label{lem:Zbeta-invariance}
The subscheme $Z_{\tilde \beta}^0 \subset Z^s(G)$ is invariant under the action of $P_{\tilde\alpha} \subset G$ on $Z^s(G)$.
\end{lemma}
\begin{proof} By the definition of the semi-stable locus, there is a cover of $Z^s(G)$ by $G$-invariant affine subschemes. Passing to this cover, we may assume $Z^s(G)$ is affine, and hence it suffices to check that for $p : Z_{\tilde \beta}^0 \rightarrow P_{\tilde\alpha}$ we have $pZ_{\tilde \beta}^0\subset Z_{\tilde \beta}^0$ (i.e., global sections are invariant). Since the entire set $P_{\tilde\alpha} Z_{\tilde \beta}^0$ is $T$-invariant, if we let $(pZ_{\tilde \beta}^0)/T$ denote the quotient of the $T$-orbits meeting $pZ_{\tilde \beta}^0$, it is in fact sufficient to show that $(pZ_{\tilde \beta}^0)/T \subset Z_{\tilde \beta}^0/T = F^0_{\tilde \beta}(Z\sslash T)$. 

To do this let $(\specialP_{Z_{\tilde\beta}^0}, \T_{\tilde\alpha}, \csigma, id)$ be the tautological family on $Z_{\tilde \beta}^0$ defined by \eqref{eq:ufam1}, and observe that $\T_{\tilde \alpha}\times_T Z = \P_{\tilde \alpha} \times_G Z$. Let $\wp \in \Aut(\T_{\tilde\alpha} \times_T Z)$ be the automorphism defined by $p$ as in Remark \ref{rmk:parabolic-auts}. We claim that 
\begin{equation}\label{eq:Zbeta-invariance1}
\cF = (\specialP_{Z_{\tilde \beta}^0}, \T_{\tilde\alpha}, \wp \circ \csigma, id)
\end{equation}
is another family of $\CC^*$-fixed quasimaps on $Z^0_{\tilde \beta}$ of class $\tilde \beta$.\footnote{
Using the presentation for $\T_{\tilde \alpha} \times_T Z$ in \eqref{eq:ufam1}, the automorphism $\wp$ is given by $(x,\bu,z) \rightarrow (x, \bu, \tau_{\tilde\alpha}(u)^{-1}p\tau_{\tilde\alpha}(u) z)$ and $\wp \circ \csigma(x,\bu) = (x, \bu, \tau_{\tilde\alpha}(u)^{-1}px).$
} Granting this, its evaluation map $ev_{\bullet,\cF}: Z_{\tilde \beta}^0 \rightarrow Z\sslash T$ must factor through the universal one, namely the inclusion $F_{\tilde\beta}(Z\sslash T) \subset Z\sslash T$. But by construction the image of $ev_{\bullet, \cF}$ is precisely $pZ_{\tilde \beta}^0/T$ (note that $\csigma_V: Z_{\tilde \beta}^0 \times\{u\neq 0\} \rightarrow Z_{\tilde \beta}^0$ is the projection). So we have $(pZ_{\tilde \beta}^0)/T \subset Z_{\tilde \beta}/T$. Since the image of $ev_{\bullet,\cF}$ is also contained in $Z^s(G)/T$, we have $(pZ_{\tilde \beta}^0)/T \subset Z_{\tilde \beta}^0/T$ as desired.

That the family $\cF$ is $\CC^*$-fixed follows from the definition of the $\CC^*$-action \eqref{eq:action2}. To see that geometric fibers have class $\tilde \beta$, let $k$ be an algebraically closed field and let $(\specialP_k, \T_{\tilde\alpha}, \sigma, id)$ be a fiber of the tautological family over a $k$-point of $Z_{\tilde \beta}^0$. The fiber of \eqref{eq:Zbeta-invariance1} over the same point is $(\specialP_k, \T_{\tilde\alpha}, \wp \circ \sigma, id)$. Then as a quasimap to $Z\sslash T$, the class of $(\specialP_k, \T_{\tilde \alpha}, \wp \circ \sigma, id)$ is the homomorphism that sends $\L\in\Pic^T(Z)$ to 
\[
\deg_{\specialP} ((\wp\circ \sigma)^*(\T\times_T \L)).            
\]
Because $P_{\tilde \alpha}$ is a connected subgroup of $\Aut(\T_{\tilde \alpha}\times_T Z)$, there is a (piecewise linear) homotopy from the automorphism $\wp$ to the identity on $\T_{\tilde \alpha}\times_T Z$. In particular the images of $\sigma$ and $\wp\circ\sigma$ are rationally equivalent, hence the degree of $\T_{\tilde \alpha}\times_T \L$ along these two rational curves is the same.

\end{proof}

 Let $\beta = \rpic(\tilde \beta)$. We define
\begin{equation}\label{eq:defpsi}
\begin{aligned}
\psi_{\tilde \beta}: F^0_{\tilde \beta}(Z \sslash T) &\rightarrow F_{\beta}(Z\sslash G)\\
(C, \T, \sigma, \bx) &\mapsto (C, G \times_T \T, \sigma, \bx),
\end{aligned}\end{equation}
noting that $\sigma$ is a section of $
\T \times_T Z = (G \times_T \T)\times_G Z.$
A priori $\psi_{\tilde \beta}$ is a map to $QG_{\beta}(Z\sslash G)$; it is straightforward to check that it factors through $F_{\beta}(Z\sslash G)$. One uses the fact that isomorphisms of principal $T$-bundles induce isomorphisms of associated $G$-bundles.

\begin{lemma}\label{lem:psi-invariant}
The composition 
\begin{equation}\label{eq:psi-invariant1}
Z_{\tilde\beta}^0 \rightarrow F^0_{\tilde \beta}(Z\sslash T) \xrightarrow{\psi} F_{\beta}(Z\sslash G)
\end{equation}
is invariant under the action of $P_{\tilde\alpha}$ on $Z^0_{\tilde\beta}$.
\end{lemma}
\begin{proof}
As in the proof of Lemma \ref{lem:Zbeta-invariance}, we may replace $Z_{\tilde \beta}^0$ with a $G$-equivariant affine cover, and hence it is enough to show that \eqref{eq:psi-invariant1} is unchanged by precomposition with an arbitrary automorphism of $Z^0_{\tilde \beta}$ induced by $p: Z_{\tilde \beta}^0 \rightarrow P_{\tilde\alpha}$. The morphism \eqref{eq:psi-invariant1} is given by the family $(\specialP_{Z^0_{\tilde \beta}}, \T_{\tilde\alpha} \times_T G, \csigma, id)$ where $\csigma$ is the tautological section defined in \eqref{eq:ufam1}. The composition
\[
Z^0_{\tilde \beta} \xrightarrow{p} Z^0_{\tilde \beta} \xrightarrow{\text{\eqref{eq:psi-invariant1}}} F_{\beta}(Z\sslash G)
\]
is induced by the pullback of this family, which is precisely $(\specialP_{Z^0_{\tilde \beta}}, \T_{\tilde\alpha} \times_T G, \wp \circ \csigma, id)$ where $\wp \in \Aut(\T_{\tilde\alpha} \times_T G) \subset \Aut(\T_{\tilde\alpha} \times_T Z)$ is the automorphism defined by $p$ in Remark \ref{rmk:parabolic-auts}. This can be seen by checking that the square
\[
\begin{tikzcd}
\cZ \arrow[r, "{(x,\bu, y) \mapsto (px, \bu, y)}"] \arrow[d] &[50pt] \cZ \arrow[d] \\
\specialP_{Z^0_{\tilde \beta}} \arrow[u, "\wp \circ \csigma", bend left] \arrow[r, "{(x,\bu) \mapsto (px, \bu)}"] & \specialP_{Z^0_{\tilde \beta}} \arrow[u, bend right, "\csigma"']
\end{tikzcd}
\]
is a fibered square of fiber bundles with sections, where we interpret $\cZ$ and $\PP^1_{Z_{\tilde \beta}^0}$ using the GIT presentation in \eqref{eq:ufam1}, and we observe that in those coordinates we have $\wp \circ \csigma(x, \bu) = (x, \bu, \tau_{\tilde\alpha}(u)^{-1}px)$.
Since $\wp \in \Aut(\P_{\tilde\alpha})$, the families $(\specialP_{Z^0_{\tilde \beta}}, G \times_T \T_{\tilde\alpha}, \csigma, id)$ and $(\specialP_{Z^0_{\tilde \beta}}, G \times_T \T_{\tilde\alpha}, \wp \circ \csigma, id)$ are isomorphic quasimaps to $Z \sslash G$. 
\end{proof}

By Lemma \ref{lem:psi-invariant} we have an induced morphism
\begin{equation}\label{eq:induced}
Z_{\tilde\beta}^0/P_{\tilde\alpha} \rightarrow F_{\beta}(Z\sslash G).
\end{equation}

\begin{lemma}\label{lem:nonabelian}
The morphism \eqref{eq:induced} is a closed embedding.
\end{lemma}
\begin{proof}
There is a closed embedding $[Z_{\tilde \beta}^0/P_{\tilde\alpha}] \rightarrow [Z^s(G)/P_{\tilde\alpha}]$, but $[Z^s(G)/P_{\tilde\alpha}]$ is represented by a flag bundle on the variety $Z\sslash G$ that is proper over $\bS_G$, so $Z_{\tilde \beta}^0/P_{\tilde\alpha}$ is represented by a scheme that is proper over $\bS_G$. Since $F_\beta(Z\sslash G)$ is separated, the morphism \eqref{eq:induced} is proper, so to prove the lemma we need only show that it is a monomorphism; i.e., fully faithful.

Let $a_i:S \rightarrow Z^0_{\tilde \beta}$ be two morphisms from a scheme $S$. We show that if they induce isomorphic maps to $F_\beta(Z\sslash G)$, then they differ by an element of $P_{\tilde\alpha}$. The map to $F_\beta(Z\sslash G)$ induced by $a_i$ is given by the family $(\specialP_S, \T_{\tilde\alpha}\times_T G, \sigma_i, id)$ with \[
\sigma_i(x, \bu) = (x, \bu, \tau_{\tilde\alpha}(u)^{-1}a_i(x))
\]
using the GIT notation of \eqref{eq:ufam1}. Observe that $\sigma_{i, V} = a_i \circ pr_1$ where $pr_1:V_S = S\times V \rightarrow S$ is the projection. If these define isomorphic quasimaps to $Z\sslash G$ then by \eqref{eq:coordinateiso} there are maps $\phi_U: U_S \rightarrow G$ and $\phi_V:V_S \rightarrow G$ such that
\begin{equation}\label{eq:outerref}
\begin{gathered}
\phi_V\cdot(a_1\circ pr_1) = a_2\circ pr_1 \quad \quad \text{as maps}\;V_S \rightarrow Z\\
(\phi_V\circ \kappa) \tau_{\tilde\alpha} = \tau_{\tilde\alpha}\phi_U\quad \quad \text{as maps}\;U_S \setminus(S \times\{0\}) \rightarrow G.
\end{gathered}
\end{equation}
The first equation shows that $\phi_V$ is given by a composition $S\times V \xrightarrow{pr_1} S \xrightarrow{g} G$ for some $g \in G(S)$ sending $a_1$ to $a_2$. The second equation shows that $\tau_{\tilde\alpha}^{-1}g\tau_{\tilde\alpha}: S \times \CC^* \rightarrow G$ has an extension to $S\times \AA^1$ (the extension is $\phi_U: S\times U \rightarrow G$), so that $g \in P_{\tilde\alpha}(S)$. Hence the isomorphism defined by $\phi_U$ and $\phi_V$ is precisely the element of $\Aut(G \times_T \T_{\tilde\alpha})$ defined by $g \in P_{\tilde\alpha}(S)$ as in Remark \ref{rmk:parabolic-auts}.
\end{proof}

\section{Compute the $I$-function}\label{sec:compute}

\subsection{Weyl group action}

It is now our goal to show that the images of the closed embeddings \eqref{eq:induced} are always disjoint or equal, and to write $F_\beta(Z\sslash G)$ as a disjoint union of a certain collection of these images. 
Define
\[
 F^0_\beta(Z\sslash T) = \bigsqcup_{\tilde \beta \rightarrow \beta} F^0_{\tilde \beta}(Z\sslash T)
\]
and let $\psi: F^0_\beta(Z\sslash T) \rightarrow F_\beta(Z\sslash G)$ be defined to equal $\psi_{\tilde \beta}$ on $F_{\tilde \beta}^0(Z\sslash T)$. 

\begin{lemma}\label{lem:surjective}
The map $\psi:F^0_\beta(Z\sslash T) \rightarrow F_\beta(Z\sslash G)$ is surjective.
\end{lemma}
\begin{proof}Let $k$ be an algebraically closed field.
First we observe that if $(\specialP_k, \T_{\tilde\alpha}, \sigma, id)$ is a quasimap to $Z \sslash T$ with $\sigma_V$ constant, then this quasimap is fixed (necessarily with a unique basepoint at $[0:1]$). For, by \eqref{eq:coord1} we have 
\begin{equation}\label{eq:surj1}
\sigma_U(u) = \tau_{\tilde\alpha}(u)^{-1}\sigma_V
\end{equation}on $U_k\setminus\{0\}$. In fact \eqref{eq:surj1} determines $\sigma_U(u)$ on all of $U$,
and the underlying map $q:\specialP_k \rightarrow [Z/T]$ is given in homogeneous coordinates by $[u:v] \mapsto \tau_{\tilde \alpha}(u)^{-1}\sigma_V.$ One may check directly that this map is invariant under the $\CC^*$-action on $\specialP_k$.

Now let $(\specialP_k, \P_{\tilde\alpha}, \sigma, id)$ be a $k$-point of $F_\beta(Z\sslash G)$, for some $\tilde\alpha$ with $\rchi(\tilde\alpha) = \bd(\beta)$ (see Remark \ref{rmk:bundle}). We find an automorphism $\phi$ of $\P_{\tilde\alpha}$ sending $\sigma$ to a section $\rho$ with $\rho_V$ a constant function. By the above discussion, the quasimap $(\specialP_k, \T_{\tilde\alpha}, \rho, id)$ is a $k$-point of $F^0_{\tilde \beta}(Z\sslash T)$ and $(\specialP_k, \P_{\tilde\alpha}, \sigma, id)$ is in its essential image.

To define $\phi_V$, let $\iota: G \hookrightarrow Z^s(G)$ be the morphism defined by $\iota(g)=g\sigma_V(0)$. This is a closed embedding as follows: since $Z^s(G) \rightarrow Z\sslash G$ is a principal $G$-bundle, the map $G \times Z^s(G) \rightarrow Z^s(G) \times_{Z\sslash G} Z^s(G)$ is an isomorphism. On the other hand by \cite[Tag~02XE]{tag} there is a fiber square 
\[
\begin{tikzcd}
Z^s(G) \times_{Z\sslash G} Z^s(G) \arrow[r] \arrow[d] & Z\sslash G \arrow[d] \\
Z^s(G) \times Z^s(G) \arrow[r] & Z\sslash G \times Z\sslash G
\end{tikzcd}
\]
so since $Z\sslash G$ is a separated scheme, the composition $G \times Z^s(G) \rightarrow Z^s(G) \times Z^s(G)$ is a closed embedding.

We claim that $\sigma_V: V_k \rightarrow X^s(G)$ factors through the emedding $\iota$. This is because the quasimap is fixed, so $\sigma_V(V_k\setminus\{0\})$ is contained in a single $G$-orbit on $Z^s(G)$. Since $G$-orbits on $Z^s(G)$ are closed, $\sigma_V(0)$ must also be in this orbit. Now define $\phi_V=\iota^{-1}\sigma_V$, or in other words,
\begin{equation}\label{eq:surjective1}
\phi_V(v)\sigma_V(0) = \sigma_V(v).
\end{equation}

Now define
\begin{equation}\label{eq:surjective2}
\phi_U(u) = \tau_{\tilde \alpha}(u)^{-1}\phi_V(u^{-1})\tau_{\tilde \alpha}(u) \quad \quad \text{for}\; u \in U_k\setminus\{0\}.
\end{equation}
We show that $\phi_U$ extends to all of $U_k$. Embed $G$ as a closed subgroup in some $GL_n$. There is a commuting diagram
\[
\begin{tikzcd}
U_k\setminus\{0\} \arrow[r, "\phi_U"] & G \arrow[r, hook] & \AA^{n^2} \arrow[r, hook, "j_0"] \arrow[d, "\det"] &  \PP^{n^2}\arrow[d,dashed, "\det_{\PP}"] \arrow[r, hook] & {[\CC^{n^2+1}/\CC^*]} \arrow[d, "\det_{\PP}^{st}"]\\
&&\CC \arrow[r, "i_0", hook] &\PP^1 \arrow[r, hook] & {[\CC^2/\CC^*]}
\end{tikzcd}
\]
where $\AA^{n^2}$ is the ring of $n\times n$ matrices with coordinates $\{x_{ij}\}_{i, j=1}^n$, and all hooked arrows are embeddings with $j_0(x_{11}, \ldots, x_{nn}) = [x_{11}:\ldots x_{nn}: 1]$ and $i_0(x) = [x:1]$. The rational map $\det_{\PP}$ sends $[x_{11}:\ldots:x_{nn}:y]$ to $[\det([x_{ij}]): y^n]$ (it is defined on the image of $j_0$) and $\det_{\PP}^{st}$ is its natural extension to the stack quotients. Let $F$ denote the composition $U_k\setminus\{0\} \xrightarrow{\phi_U} G \hookrightarrow \PP^{n^2}$; then $F$ extends to a morphism $\tilde F: U_k \rightarrow \PP^{n^2}$ (see e.g. \cite[Thm~16.5.1]{vakil}). 

We show $\tilde F$ factors through $G$. First notice that the morphism $\det \circ \phi_V$ from $V_k\simeq\AA^1$ to $\CC^*$ must be constant, say equal to $d$. (One way to see this is that the corresponding ring map $\CC[x, x^{-1}] \rightarrow \CC[x]$ must send $x$ to a unit and hence to something in $k$.) So from the formula for $\phi_U$, the composition $\det_{\PP}^{st}\circ \tilde F$ is also the constant map to the point $[d:1]$. In particular $\tilde F$ factors through $j_0$ and even through $GL_n\subset\AA^{n^2}$. But $G$ is a closed subgroup of $GL_n$, so $\tilde F$ factors through $G$. 

By \eqref{eq:surjective2}, the morphisms $\phi_U$ and $\phi_V$ define an automorphism of $\P_{\tilde \alpha}$, and by \eqref{eq:surjective1} its inverse sends $\sigma$ to a section $\rho$ with $\rho_V=\sigma_V(0)$ a constant function.

\end{proof}

Define $ev_\bullet: F^0_\beta(Z\sslash T) \rightarrow Z^s(G)/T$ to equal $ev_\bullet$ on each component $F^0_{\tilde \beta}(Z\sslash T)$. Notice that $F^0_\beta(Z\sslash T)$ is a stack of maps to $[Z/T]$ and hence carries an action by the group scheme $W$ as in \eqref{eq:weyl2}. Under this action, $ev_\bullet$ is equivariant and $\psi$ is invariant. 

For $\tilde \alpha \in \Hom(\chi(T), \ZZ)$, let $W_{\tilde \alpha} = N_{L_{\tilde \alpha}}(T)/T$ be the Weyl group of $L_{\tilde \alpha}$, the unique Levi subgroup of $P_{\tilde \alpha}$ containing $T$. Recall that the group $W$ acts on $\Pic^T(Z)$ as in Section \ref{sec:wonpic}; this defines an action on $\Hom(\Pic^T(Z), \ZZ)$ analogous to \eqref{eq:dual_actions}. Observe also that since $W$ is finite, if $S$ is connected then there is a bijection between elements of $W(S)$ and $W(\Spec(k))$ for $k$ an algebraically closed field.

\begin{lemma}\label{lem:weyl_action}The action of the $W$ on $F^0_\beta(Z\sslash T)$ and $\rpic^{-1}(\beta) \subset \Hom(\Pic^T(Z), \ZZ)$ has the following properties.
\begin{enumerate}
\item If $(C, \T, \sigma, \bx)$ is an $S$-quasimap of class $\tilde \beta$ and $w \in W(S)$, then $w\cdot(C, \T, \sigma, \bx)$ has class $w\cdot \tilde \beta$. In particular the action of $W$ permutes the components $F_{\tilde \beta}^0(Z\sslash T)$  of $ F^0_\beta(Z\sslash T)$.
\item If $\tilde \beta_1, \tilde \beta_2 \in \Hom(\Pic^T(Z), \ZZ)$ are not in the same $W$-orbit, then the intersection of $\psi(F^0_{\tilde \beta_1}(Z\sslash T))$ and $\psi(F^0_{\tilde \beta_2}(Z\sslash T))$ is empty. 
\item If $\tilde \alpha = \bdtilde(\tilde \beta)$, then the stabilizer of $\tilde \beta \in \Hom(\Pic^T(Z), \ZZ)$ is $W_{\tilde \alpha}$.
\end{enumerate}
\end{lemma}
\begin{proof}
To prove (1), let $k$ be algebraically closed and let $(\specialP_k, \T, \sigma, id)$ be a $k$-quasimap in $F^0_{\tilde \beta}(Z\sslash T)$ and choose an equivariant line bundle $\L \in \Pic^T(Z)$. We use the description of $w \cdot (\specialP_k, \T, \sigma, id)$ in \eqref{eq:weyl2}. Then from \eqref{eq:wonpic1} we have a fiber square
\[
\begin{tikzcd}
\T \times w^{-1}(w^*\L) \arrow[r] \arrow[d]& w \T\times \L \arrow[d]\\
\T \times Z \arrow[r, "{(id, w\cdot)}"] &w\T\times Z
\end{tikzcd}
\]
where the horizontal maps are twisted-equivariant isomorphisms and vertical maps are $T$-equivariant. Hence we can quotient the square by $T$ to obtain a fiber square over $\varpi$ defined in \eqref{eq:wiso}. From this it follows that
\[
 \deg_{\specialP}(\varpi\circ\sigma)^*(w\T\times_T \L) = \deg_{\specialP}\sigma^*(\T\times_T w^{-1}(w^*\L)) = \deg_{\specialP}\sigma^*(\T\times_T (w^{-1}\cdot \L)),
\]
or in other words, the class of the quasimap $(\specialP_k, w\T, \varpi \circ \sigma, id)$ applied to $\L$ is $(w\cdot \tilde \beta)(\L).$

For (2),  for $i=1,2$ let $(\specialP_k, \T_{\tilde \alpha_i}, \sigma_i, id)$ be a $k$-quasimap of class $\tilde \beta_i$, and suppose these two  quasimaps have the same image in $F_\beta(Z\sslash G)$. Then in particular the associated $G$-bundles $\P_{\tilde \alpha_1}$ and $\P_{\tilde \alpha_2}$ are isomorphic, so by Theorem \ref{lem:bundle_abnonab} there is some $w \in N_G(T)$ such that $w\tilde \alpha_1 = \tilde \alpha_2$. Then the proof of Lemma \ref{lem:nonabelian} shows that there exists $p \in P_{\tilde \alpha_2}$ such that 
\[w \cdot (\specialP_k, \T_{\tilde \alpha_1}, \sigma_1, id) = (\specialP_k, \T_{\tilde \alpha_2}, \varpi \circ \sigma_1, id) = (\specialP_k, \T_{\tilde \alpha_2}, \wp\circ\sigma_2, id)\] (in particular, this argument did not require the two quasimaps to have the same class, just the same bundle type). Finally, as argued in the proof of Lemma \ref{lem:Zbeta-invariance}, the quasimaps $(\specialP_k, \T_{\tilde \alpha_2}, \wp\circ\sigma_2, id)$ and $(\specialP_k, \T_{\tilde \alpha_2}, \sigma_2, id)$ have the same class. So the class of $w\cdot(\specialP_k, \T_{\tilde \alpha_1}, \sigma_1, id)$ equals the class of $(\specialP_k, \T_{\tilde \alpha_2}, \sigma_2, id)$, which by part (1) implies $w \cdot \tilde \beta_1 = \tilde \beta_2$.

To prove (3), first note that by definition, $L_{\tilde \alpha}$ is the $G$-stabilizer of the cocharacter $\tau_{\tilde \alpha}$. Since the identification \eqref{eq:dual} of $\tilde \alpha$ and $\tau_{\tilde \alpha}$ is $W$-equivariant, we see that $W_{\tilde \alpha}$ is the stabilizer of $\tilde \alpha$. So the stabilizer of $\tilde \beta$ is a subgroup of $W_{\tilde \alpha}$.
Conversely, if $w \in N_{L_{\tilde \alpha}}(T)$ and $(\specialP_k, \T_{\tilde \alpha}, \sigma, id)$ is a quasimap of class $\tilde\beta$, we want to show that $(\specialP_k, \T_{w\tilde \alpha}, \varpi\circ\sigma, id)$ also has class $\tilde\beta$ (for then part (1) implies $w \cdot \tilde \beta = \tilde \beta$). 
Because $w$ is in the stabilizer of $\tilde \alpha$, the bundles $\T_{\tilde \alpha}$ and $\T_{w\tilde \alpha}$ are identically the same. In fact the morphism 
\[\varpi: \T_{\tilde \alpha} \times_T G \rightarrow \T_{w\tilde \alpha}\times_T G\] defined in \eqref{eq:wiso} is the same as the automorphism $\wp \in \Aut(\T_{\tilde \alpha}\times_T G)$ determined by $w$ as an element of $P_{\tilde \alpha}$. It was argued in the proof of argued in the proof of Lemma \ref{lem:Zbeta-invariance} that the class of $(\specialP_k, \T_{\tilde \alpha}, \wp \circ \sigma, id)$ is $\tilde \beta$. \end{proof}

Lemma \ref{lem:weyl_action} shows that the images 
\[F_{\tilde \beta}(Z\sslash G) := \psi(F^0_{\tilde \beta}(Z\sslash T))\] are either disjoint or identical subspaces of $F_\beta(Z\sslash G)$. Moreover, from Lemma \ref{lem:surjective} we know $\psi$ is surjective.
Let $\tilde \beta_i$ be elements of $\Hom(\Pic^T(Z),\ZZ)$ such that 
\begin{equation}\label{eq:components}F_\beta(Z\sslash G) =  \bigsqcup_i F_{\tilde \beta_i}(Z\sslash G),\end{equation}
i.e., the $\tilde \beta_i$ are elements of distinct Weyl orbits on $\Hom(\Pic^T(Z), \ZZ)$.
It follows from Lemmas \ref{lem:nonabelian} and \ref{lem:finite} that \eqref{eq:components} is a decomposition of $F_\beta(Z\sslash G)$ as a disjoint union of open and closed subschemes.

\subsection{Relate the perfect obstruction theories}
The main goal of this section is to relate the perfect obstruction theory of $F^0_{\tilde \beta}(Z\sslash T)$ to the pullback of the perfect obstruction theory of $F_{\tilde \beta}(Z\sslash G)$ under $\psi_{\tilde \beta}$. Let 
\[
\EE_G := \EE_{QG_\beta(Z\sslash G)} \quad \quad \EE_T := \EE_{QG_{\tilde \beta}(Z\sslash T)}
\]
denote the absolute perfect obstruction theories defined in \eqref{eq:pot}. In what follows, if $A$ (resp. $B$) is a complex on $QG_\beta(Z\sslash G)$ (resp. $QG_{\tilde \beta}(Z\sslash T)$), we will use the notation
 \[
 A|_F := A |_{F_{\tilde \beta}(Z\sslash G)} \quad \quad (\text{resp.}\;\, B|_F := B |_{F_{\tilde \beta}^0(Z\sslash T)})
 \]
 for the restricted complex whenever the intended class $\tilde \beta$ is clear. In particular, we have
\[
\EE_G|_F := \EE_G|_{F_{\tilde\beta}(Z\sslash G)} \quad \quad \EE_T|_F := \EE_T|_{F_{\tilde\beta}^0(Z\sslash T)}.
\]

To relate the obstruction theories, we use the tower of morphisms 
\[\specialP \times [X/T]\xrightarrow{\psi} \specialP \times[X/G] \rightarrow \specialP \rightarrow pt\]
which leads to a morphism of moduli of sections 
\[
\psi: \Sec{pt}{\specialP\times[Z/T]}{\specialP}\rightarrow \Sec{pt}{\specialP\times[Z/G]}{\specialP}.\] 
Define $QG^0_{ \beta}(Z\sslash T)$ and the map $\psi^0$ to be the fiber product
\[
\begin{tikzcd}
QG^0_{\beta}(Z\sslash T) \arrow[r] \arrow[d, "\psi^0"] & \Sec{pt}{\specialP\times{[Z/T]}}{\specialP} \arrow[d, "\psi"] \\
QG_{\beta}(Z\sslash G) \arrow[r] & \Sec{pt}{\specialP\times{[Z/G]}}{\specialP}
\end{tikzcd}
\]
Notice that $QG^0_\beta(Z\sslash T)$ is an open substack of the (finite) disjoint union of moduli spaces $QG_{\tilde \beta}(Z\sslash T)$ with $\tilde \beta$ mapping to $\beta$. 

\begin{lemma}\label{lem:dt}
In the derived category of $F_{\tilde \beta}^0(Z\sslash T)$, there is a morphism of distinguished triangles
\begin{equation}\label{eq:dt}
\begin{tikzcd}
\psi^*_{\tilde \beta}( \EE_G|_F) \arrow[r]\arrow[d] & \EE_T|_F \arrow[r] \arrow[d]& (R \pi_* n_F^* \TT_{\psi})^\vee \arrow[d]\arrow[r] & {}\\
\psi^*_{\tilde \beta}(\LL_{QG_\beta(Z\sslash G)}|_F) \arrow[r] & \LL_{QG_{\tilde \beta}(Z\sslash T)}|_F \arrow[r] &  \LL_{\psi^{0}}|_F \arrow[r] & {}
\end{tikzcd}
\end{equation}
where $\psi$ is the canonical map $[Z/T]\rightarrow[Z/G]$ and $n_F$ is the restriction to $\specialP \times F^0_{\tilde \beta}(Z\sslash T)$ of the universal map defined in Section \ref{sec:pot}.
\end{lemma}
\begin{proof}

On $QG^0_{\beta}(Z\sslash T)$ we have the following morphism of distinguished triangles, where the middle and left vertical arrows are the absolute perfect obstruction theories of \eqref{eq:pot} (see \cite[Lem~A.2.3]{cjw}).
\begin{equation}\label{eq:dt1}
\begin{tikzcd}
(\psi^0)^* \EE_G \arrow[r] \arrow[d, "(\psi^0)^*\phi_G"] & \EE_T \arrow[r] \arrow[d, "\phi_T"] & (R \pi_* n^* \TT_{\psi})^\vee \arrow[r]\arrow[d] & {}\\
(\psi^0)^* \LL_{QG_{\tilde \beta}(Z\sslash G)} \arrow[r] & \LL_{QG_{\tilde \beta}(Z\sslash T)} \arrow[r] &\LL_{\psi^0} \arrow[r] & {}
\end{tikzcd}
\end{equation}
Now restrict this diagram to $F^0_{\tilde \beta}(Z\sslash T)$ and use that $R\pi_*$ commutes with restriction to the fixed locus by \cite[Cor~4.13]{HR17}.

\end{proof}

We use Lemma \ref{lem:dt} to relate the virtual and Euler classes appearing in the definition \eqref{eq:Ifunc} of the $I$-function. We recall the definitions of these classes. According to \cite[Sec~3]{CKL17}, the composition
\[
\EE_G |_F^{\fix} \xrightarrow{\phi|_F^\fix} \LL_{QG_\beta(Z\sslash G)}|_F^{\fix} \rightarrow \LL_{F_{\tilde \beta}(Z\sslash G)}
\]
is a perfect obstruction theory on $F_{\tilde \beta}(Z\sslash G)$. The virtual class $[F_{\tilde \beta}(Z\sslash G)]^{\vir}$ in \eqref{eq:Ifunc} is the one defined by this perfect obstruction theory. By definition we have 
\begin{equation}\label{eq:virtnorm}
N^{\vir}_{F_{\tilde \beta}(Z\sslash G)} := (\EE_G |_F^{\mov})^\vee.
\end{equation}
We note that the complex \eqref{eq:virtnorm} has a global resolution by vector bundles. (This follows from \cite[Tag~0F8E]{tag} and the fact that $F_{\tilde \beta}(Z\sslash G)$ has the resolution property, see \cite[Thm~2.1]{To04}.) 
Thus we may define its equivariant Euler class as in \cite[Def~3.3]{CKL17}; i.e., if $N^\bullet$ is a finite complex of vector bundles with $e_{\CC^*}(N^i)$ invertible for $i$ odd, then 
\begin{equation}\label{eq:defeuler}
e_{\CC^*}(N) = \prod_i e_{\CC^*}(N^i)^{(-1)^i}.
\end{equation}
One may show as in \cite[Tag~0ESZ]{tag} that the definition \eqref{eq:defeuler} depends only on the complex represented by $N$ and not on the choice of resolution. We make a simple observation about this definition that is useful in our computation. 
\begin{lemma}\label{lem:compute-euler}
If moreover the cohomology sheaves $H^i(N)$ are locally free, then
$e(N) = \prod_i e(H^i(E))^{(-1)^i}$.
\end{lemma}
\begin{proof}
Let $N$ be the complex $\ldots \rightarrow N^n \xrightarrow{d_n}  N^{n+1} \rightarrow \ldots$ with locally free cohomology sheaves. For each $n$, we have short exact sequences
\begin{gather*}
0 \rightarrow \ker(d_n) \rightarrow N^n \rightarrow \Im(d_n) \rightarrow 0\\
0 \rightarrow \Im(d_{n-1}) \rightarrow \ker(d_n) \rightarrow H^n(N) \rightarrow 0.
\end{gather*}
The result now follows from a routine computation using additivity of the Euler class and the fact that $\ker(d_n)$ and $\Im(d_n)$ are locally free by \cite[Tag~0F8J]{tag}.

\end{proof}

% To compute the euler classes, first recall the definition: if $A^\bullet$ is a complex of completely moving vector bundles, then 
% \[e_{\CC^*}(A^\bullet):= \frac{e_{\CC^*}(\oplus A^{even})}{e_{\CC^*}(\oplus A^{odd})}.
% \]
% It is then clear that if $A^\bullet \rightarrow B^\bullet \rightarrow C^\bullet$ is a distinguished triangle of complexes of completely moving vector bundles, we have the relationship
% \[
% e_{\CC^*}(B^\bullet) = e_{\CC^*}(A^\bullet)e_{\CC^*}(C^\bullet).
% \]
% \todo{I remember that there is actually something here. Something about it's tricky to even define euler class of a complex? or something?}

\begin{corollary}\label{cor:pullback}
We have the following relationships on $F_{\tilde \beta}^0(X\sslash T)$:
\begin{align}
\psi_{\tilde \beta}^*[F_{\tilde \beta}(Z\sslash G)]^{\vir}&=[F^0_{\tilde \beta}(Z\sslash T)]^{\vir} \label{eq:relatevir}\\
\psi_{\tilde \beta}^*e_{\CC^*}(N^{\vir}_{F_{\tilde \beta(Z\sslash G)}})=e_{\CC^*}(N^{\vir}_{F^0_{\tilde \beta(Z\sslash T)}})& \frac{\prod_{\tilde\beta(\rho)<0}\prod_{k=\tilde\beta(\rho)+1}^{-1}(c_1(\L_\rho)+kz)}{\prod_{\tilde\beta(\rho)\geq 0} \prod_{k=1}^{\tilde \beta(\rho)}(c_1(\L_\rho)+kz)}.\label{eq:relateeuler}
\end{align}
Here, $\rho$ ranges over roots of $G$ with respect to $T$.
\end{corollary}
\begin{proof}
For \eqref{eq:relatevir}, modify \eqref{eq:dt} by applying the ``fix'' functor, and then use the commuting square
\[
\begin{tikzcd}
F^0_{\tilde \beta}(Z\sslash T)  \arrow[r] \arrow[d, "\psi_{\tilde \beta}"] & QG_{\beta}^0(Z\sslash T) \arrow[d, "\psi^0"] \\
F_{\tilde \beta}(Z\sslash G) \arrow[r]& QG_{\beta}(Z\sslash G)
\end{tikzcd}
\]
and functoriality of the cotangent complex to map the bottom row of the fixed part of \eqref{eq:dt} to the canonical distinguished triangle for $\psi_{\tilde \beta}$. The resulting morphism of distinguished triangles
\[
\begin{tikzcd}
\psi^*_{\tilde \beta}( \EE_G|^\fix_F) \arrow[r]\arrow[d] & \EE_T|^\fix_F \arrow[r] \arrow[d]& ((R \pi_* n_F^* \TT_{\psi})^\fix)^\vee \arrow[d, "\phi_{\psi}"]\arrow[r] & {}\\
\psi^*_{\tilde \beta}(\LL_{F_{\tilde \beta}(Z\sslash G)}) \arrow[r] & \LL_{F^0_{\tilde \beta}(Z\sslash T)} \arrow[r] &  \LL_{\psi_{\tilde \beta}} \arrow[r] & {}
\end{tikzcd}
\]
is exactly the diagram for checking that we can define virtual pullback along $\psi_{\tilde \beta}$ as in \cite{manolache}. Observe that $\psi_{\tilde \beta}$ is smooth (being the projection morphism of a flag bundle). We claim that the arrow $\phi_{\psi}$ is a quasi-isomorphism; granting this, the diagram above implies that virtual pullback along $\psi_{\tilde \beta}$ is defined and agrees with the usual flat pullback \cite[Rmk~3.10]{manolache}. By \cite[Cor~4.9]{manolache}, we get \eqref{eq:relatevir}.

To show that $\phi_{\psi}$ is a quasi-isomorphism, it suffices to show that it induces an isomorphism of cohomology sheaves of degree -1 (since a standard diagram chase shows that $\phi_{\psi}$ is an obstruction theory). Because $\LL_{\psi_{\tilde\beta}}$ is represented by a vector bundle in degree 0, it suffices to show that $(R^1\pi_*n_F^*\TT_{\psi})^{\fix}=0.$ By Nakayama's lemma it suffices to check that fibers at closed points vanish. If $q = (\specialP \xrightarrow{n} [Z/T])$ is a closed point of $F^0_{\tilde \beta}(X\sslash T)$ then $(R^1\pi_*n_F^*\TT_{\psi})|_q = H^1(\specialP, n^*\TT_{\psi})$. Since the $\CC^*$-linearization on $n^*\TT_{\psi}$ is trivial, and $H^1(\specialP, n^*\TT_{\psi})$ has a basis of monomials in $u,v$ where each variable has degree at most -1, we see that this representation has no fixed part.

To compute \eqref{eq:relateeuler}, observe from the universal family \eqref{eq:ufam1} that we may write the universal curve $F^0_{\tilde \beta}(Z\sslash T) \times \specialP$ as the quotient
$(Z^0_{\tilde \beta} \times (\CC^2\setminus \{0\}) / (T \times \CC^*)$
and that with this presentation, the vector bundle $n_F^*\TT_{\psi}$ on $F^0_{\tilde \beta}(Z\sslash T)\times \specialP$ is induced from a topologically trivial bundle on $Z^0_{\tilde \beta}\times (\CC^2\setminus\{0\})$. This trivial bundle has fiber equal to the subspace of the lie algebra $\g$ of $G$ with nontrivial weights, viewed as a $T \times \CC^*$ representation via the homomorphism
\[
T \times \CC^* \xrightarrow{(t, s) \rightarrow t\tau_{\tilde \alpha}(s)^{-1}} T
\]
and the adjoint representation of $T$ on $\g$. In other words, $n_F^*\TT_{\psi}$ splits as a direct sum of line bundles
\[
n_F^*\TT_{\psi} = \oplus_{\rho} \pi^*\L_\rho\otimes\OO_{\specialP\times F_{\tilde \beta}(Z\sslash T)}(\tilde \beta(\rho))
\]
where the sum ranges over the roots of $\g$ relative to $T$. By the projection formula and flat base change, we have
\[
R^i\pi_*(n_F^*\TT_\psi) = \oplus_\rho \L_\rho \otimes R^i\pi_*(\OO_{\specialP}(\tilde \beta(\rho)),
\]
and in particular these sheaves are locally free.
Now we apply \cite[Tag~0F8G,~0F9F]{tag} and Lemma \ref{lem:compute-euler} to the dual of the top row of \eqref{eq:dt}, recalling the definition \eqref{eq:virtnorm}. We get
\begin{equation}\label{eq:compute5}
\psi_{\tilde \beta}^*e_{\CC^*}(N^{vir}_{F_{\tilde \beta}(Z\sslash G)}) = e_{\CC^*}(N^{vir}_{F^0_{\tilde \beta}(Z\sslash T)})  \frac{e_{\CC^*}((R^1\pi_*n_F^*\TT_\psi)^\mov)}{e_{\CC^*}((R^0\pi_*n_F^*\TT_\psi)^\mov)}.
\end{equation}
If $\tilde \beta(\rho)$ is nonnegative, then $R^1\pi_*(\OO_{\specialP}(\tilde \beta(\rho)))$ vanishes, but $R^0\pi_*(\OO_{\specialP}(\tilde \beta(\rho)))$ is nonzero on a closed fiber of $\pi$, and a basis is given by the monomials $u^{\tilde \beta(\rho)}, u^{\tilde \beta(\rho)-1}v, u^{\tilde \beta(\rho)-2}v^2, \ldots, v^{\tilde \beta(\rho)}$ which have $\CC^*$-weights $0, 1, 2, \ldots, \tilde \beta(\rho)$, respectively. Hence the Euler class of the moving part of the corresponding summand of $R^0\pi_*n_F^*\TT_\psi$ is $\prod_{k=1}^{\tilde \beta(\rho)}(c_1(\L_\rho)+kz)$.

If $\tilde \beta(\rho)$ is less than -1, then $R^0\pi_*(\OO_{\specialP}(\tilde \beta(\rho))$ vanishes but $R^1\pi_*(\OO_{\specialP}(\tilde \beta(\rho))$ is nonzero on a closed fiber of $\pi$, and a basis is given by monomials $u^{-1}v^{\tilde \beta(\rho)+1}, u^{-2}v^{\tilde \beta(\rho)+2}, \ldots, u^{\tilde \beta(\rho)+1}v^{-1}.$ Hence the Euler class of the moving part of the corresponding summand of $R^1\pi_*n_F^*\TT_\psi$ is $\prod_{k=\tilde\beta(\rho)+1}^{-1}(c_1(\L_\rho)+kz).$
\end{proof}
\subsection{Proof of the main theorem}\label{sec:proof}

The following lemma, a restatement of \cite[Prop~2.1]{brion}, lets us navigate around the bottom left triangle of \eqref{eq:big_diagram}. 

\begin{lemma}\label{lem:brion}
For any $\delta \in A_*(Z^s(G)/P_{\tilde \alpha})$, we have
\begin{equation}\label{eq:point}
g^*f_*\delta = \sum_{w\in W/W_{\tilde \alpha}} w^*\left[ \frac{p^*\delta}{\prod_{\rho\in R^-_{\tilde \alpha}} c_1(\L_{\rho})}\right]
\end{equation}
where $R^-_{\tilde \alpha}$ is the set of roots of $G$ whose inner product with the dual character $\tilde \alpha$ is negative.
\end{lemma}
\begin{proof}
We reduce this statement to the one in \cite[Prop~2.1]{brion}. Using the dynamic method, one may obtain a Borel subgroup $B$ of $G$, contained in $P_{\tilde \alpha}$, equal to $P_\mu$ for some cocharacter $\mu$ that is positive on any root where $\tau_{\tilde \alpha}^{-1}$ is positive (see e.g. \cite[45]{confeng}, noting that our definition of $P_{\tilde \alpha}$ is dual to the one in that reference). So the opposite roots of this Borel, minus the roots of $L_{\tilde \alpha}$, are precisely those roots where $\tau_{\tilde \alpha}^{-1}$ is negative. Recalling the relationship \eqref{eq:dual}, we see that this is the set $R_{\tilde \alpha}^-$.

The statement of \cite[Prop~2.1]{brion} is for classes in $A_*(Z^s(G)/P_{\tilde \alpha})_\QQ$ that are in the image of 
\[c^{W_{\tilde \alpha}}: Sym(\chi(T)_\QQ)^{W_{\tilde \alpha}} \rightarrow A_*(Z^s(G)/P_{\tilde \alpha})_\QQ,\]
a morphism defined in \cite[47]{brion}. We claim that when the classes in the image of $c^{W_{\tilde \alpha}}$ are restricted to any fiber of $Z^s(G)/P_{\tilde \alpha} \rightarrow Z^s(G)/G$, they generate the Chow group of that fiber, so that by the Leray-Hirsch theorem \cite[Lem~6]{EG97} it suffices to show \eqref{eq:point} for $\delta$ in the image of $c^{W_{\tilde \alpha}}$. To prove the claim, let $z \in Z^s(G)/G$. We have a commuting diagram
\[
\begin{tikzcd}
Sym(\chi(T)_\QQ) \arrow[r, "c"] & A_*(Z^s(G)/B) \arrow[r, "\iota_z^*"] & A_*(G/T) \\
Sym(\chi(T)_\QQ)^{W_{\tilde \alpha}} \arrow[u, hookrightarrow] \arrow[r, "c^{W_{\tilde \alpha}}"] & A_*(Z^s(G)/P_{\tilde \alpha}) \arrow[u] \arrow[r, "\iota_z^*"] & A_*(G/P_{\tilde \alpha}) \arrow[u, "F"]
\end{tikzcd}
\]
where the maps labeled $\iota_z^*$ are restrictions to fibers over $z$, vertical maps of Chow groups are pullbacks, the left square commutes by definition of $c^{W_{\tilde \alpha}}$, and we have used that the pullback $A_*(G/B) \rightarrow A_*(G/T)$ is an isomorphism \cite[(2.6)]{ellingsrud}. The morphism $c$ is the characteristic homomorphism, and by \cite[(1.3)]{ellingsrud} the composition of the top arrows is surjective. By Proposition \ref{prop:chow}, the map $F$ is injective. Now a diagram chase shows that the composition of the bottom arrows is surjective, as desired.

Now for $\delta = c^{W_{\tilde \alpha}}(\xi)$, the result \cite[Prop~2.1]{brion} tells us
\[
g^*f_*\delta = p^*f^*f_*c^{W_{\tilde \alpha}}(\xi) = p^*c^{W_{\tilde \alpha}}\left(\sum_{w \in W/W_{\tilde \alpha}}w\cdot(\xi/\prod_{\rho \in R^-_{\tilde \alpha}}\rho) \right).
\]
It follows from the definition of $c^{W_{\tilde \alpha}}$ that the composition $p^*c^{W_{\tilde \alpha}}: Sym(\chi(T)_\QQ)^{W_{\tilde \alpha}} \rightarrow A_*(Z^s(G)/T)_{\QQ}$ sends a character $\xi$ to $c_1(\L_\xi)$. By \eqref{eq:wonpic2} this composition is $W$-equivariant, so \eqref{eq:point} follows.

\end{proof}

Let $\tilde \alpha_i = \bdtilde(\tilde \beta_i)$. Turning to formula \eqref{eq:Ifunc} for $I_\beta^{Z\sslash G}(z)$, we first write it as a sum of pushforwards from $F_{\tilde \beta_i}(Z\sslash G)$ using \eqref{eq:components}. We use Proposition \ref{prop:diagram} to identify the evaluation map on each component, and then apply Lemma \ref{lem:brion}, obtaining
\begin{equation}\label{eq:start}
g^*I^{Z\sslash G}_\beta = \sum_{\tilde \beta_i} \sum_{w \in W/W_{\bdtilde(\tilde\beta_i)}} w^*\left[ \frac{p^*i_*([F_{\tilde \beta_i}(Z\sslash G)]^{vir}e_{\CC^*}(N_{F_{\tilde \beta_i}(Z\sslash G)}^{vir})^{-1})}{\prod_{\rho\in R^-_{\tilde \alpha_i}} c_1(\L_{\rho})}\right].
\end{equation}

Let us simplify the numerator of a summand of \eqref{eq:start}. From Lemma \ref{lem:weyl_action} and the equivariance of $ev_{\bullet}$, we have a commuting diagram
\begin{equation}\label{eq:twosquares}
\begin{tikzcd}
F^0_{w^{-1}\tilde \beta_i}(Z\sslash T) \arrow[r, "w"] \arrow[d,hook, "ev_\bullet"]& F^0_{\tilde \beta_i}(Z\sslash T) \arrow[r, "\psi_{\tilde \beta_i}"] \arrow[d,hook, "ev_\bullet"]& F_{\tilde \beta_i}(Z\sslash G) \arrow[d,hook, "i"]\\
Z^s(G)/T \arrow[r,"w"] & Z^s(G)/T \arrow[r,"p_{\tilde \alpha_i}"] & Z^s(G)/P_{\tilde \alpha_i}
\end{tikzcd}
\end{equation}
The square on the left is fibered because $w$ is an isomorphism and the square on the right is fibered by Proposition \ref{prop:diagram}, so the outer square is fibered.
Because $w$ and $p_{\tilde \alpha_i}$ are flat, by \cite[Prop~1.7]{Fu98} we have $w^*p_{\tilde \alpha_i}^*i_* = (ev_\bullet)_*w^*\psi_{\tilde \beta_i}^*$, so that the numerator of a summand in \eqref{eq:start} is
\begin{equation}\label{eq:num}
(ev_\bullet)_*\psi_{w^{-1}\tilde \beta_i}^*\big([F_{\tilde \beta_i}(Z\sslash G)]^{vir}e_{\CC^*}(N_{F_{\tilde \beta_i}(Z\sslash G)}^{vir})^{-1}\big)
\end{equation}
where we have also used that $\psi$ (defined on $ F^0_\beta(Z\sslash T)$) is equivariant.

Now let us compute the denominator of a summand of \eqref{eq:start}. We get 
\begin{equation}\label{eq:denom}
w^*\prod_{\rho \in R^-_{\tilde \alpha_i}} c_1(\L_\rho) = \prod_{\rho \in R^-_{\tilde \alpha_i}} c_1(\L_{w^{-1}\cdot\rho}) = \prod_{\rho \in R^-_{w^{-1}\cdot\tilde \alpha_i}} c_1(\L_\rho).
\end{equation}
The first equality uses \eqref{eq:wonpic2} and the second follows from the fact that the natural pairing between $\chi(T)$ and $\Hom(\chi(T),\ZZ)$ is invariant.

Finally we apply equations \eqref{eq:num} and \eqref{eq:denom} and use Lemma \ref{lem:weyl_action} to combine the double sum in \eqref{eq:start} into a single sum, obtaining
\begin{equation}\label{eq:mainproof1}
g^*I_\beta^{Z\sslash G} = \sum_{\tilde \beta \mapsto \beta} \frac{(ev_\bullet)_*\psi_{\tilde \beta}^*([F_{\tilde \beta}(Z\sslash G)]^{vir}e_{\CC^*}(N_{F_{\tilde \beta}(Z\sslash G)}^{vir})^{-1}))}{\prod_{\rho \in R^-_{\bdtilde(\tilde\beta)}}c_1(\L_\rho)}.
\end{equation}
We can compute the pullbacks in the numerator with Corollary \ref{cor:pullback}. Finally, applying the projection formula and recalling that $R^-_{\bdtilde(\tilde \beta)}$ is just the set of roots with $\bdtilde(\tilde \beta)(\rho)=\tilde \beta(\rho)<0$, we recover Theorem \ref{thm:main}.

\section{Extensions and applications}

\subsection{Equivariant $I$-functions}\label{sec:equivariant} Let $S$ be a torus and suppose that we have an action of $S \times G$ on $Z$ extending the action of $G = \{1\} \times G$ on $Z$. In other words, $S$ acts on $Z$ and this action commutes with the action of $G$. Then $S$ acts on $[Z/G]$ and $[Z/T]$ (see \cite[Rmk~2.4]{romagny}) and this defines actions on $QG_\beta(X\sslash G)$ and $QG_{\tilde \beta}(X\sslash T)$ and their universal families by \cite[Sec~A.3]{cjw}, viewing them as substacks of the moduli of sections as in Section \ref{sec:qmapmoduli}. Moreover the perfect obstruction theories $E_{QG}$ in \eqref{eq:pot} are canonically $S$-equivariant as in \cite[Sec~A.3]{cjw}.

Because the actions of $S$ and $\CC^*$ on $\PP^1 \times [Z/G]$ commute, the $\CC^*$-fixed locus $F_{ \beta}(Z\sslash G)$ is invariant under the action of $S$ and the $\CC^*$-fixed and moving parts of the perfect obstruction theory $E_{QG_\beta}$ are also $S$-equivariant. Finally the map $ev_{\bullet}$ is $S$-equivariant since the universal family on $QG_\beta(X\sslash G)$ is. These statements also hold for $T$ in place of $G$. Since the spaces $F_\beta(Z\sslash G)$ are schemes, we can use the equivariant intersection theory of \cite{EG98} to define $[F_\beta(Z\sslash G)]^{S, \vir}$ in $A_*^S(F)$. The class $e_{S\times \CC^*}(N_{F_\beta(Z\sslash G)}^\vir)$ is defined as in \cite[Tag~0F9E]{tag} but with the Euler classes replaced by their $S \times \CC^*$-equivariant counterparts. Hence we can define the $S$-equivariant $I$-function via the same formulas \eqref{eq:Ifunc}, but with all objects replaced by their $S$-equivariant counterparts.

For $\rho \in \chi(T)$, let $L_{\rho}$ be the $S$-equivariant line bundle on $X^s(T)/T$ given by 
\begin{equation}\label{eq:equivariant1}
L_\rho = X^s(T)\times_T \CC_\rho
\end{equation}
where $\CC_\rho$ is the $S \times T$-equivariant representation where $S$ acts trivially and $T$ acts with character $\rho$. 

\begin{corollary}\label{cor:equivariant}
The $S$-equivariant $I$-functions of $Z\sslash G$ and $Z\sslash T$ satisfy the equation \eqref{eq:main}, with $I^{S, Z\sslash G}_\beta(z)$ and $I^{S, Z\sslash T}_\beta(z)$ in place of $I^{ Z\sslash G}_\beta(z)$ and $I^{ Z\sslash T}_\beta(z)$.
\end{corollary}

\begin{proof}
First note that Proposition \ref{prop:chow} and Lemma \ref{lem:brion} also hold $S$-equivariantly (in Lemma \ref{lem:brion}, the line bundles $c_1(\L_\rho)$ are $S$-equivariant as in \eqref{eq:equivariant1} and we take the $S$-equivariant first Chern class). The same proofs work after replacing $Z^s(G)$ with $Z^s(G)\times_S U$, where $U \rightarrow U/S$ is an appropriate approximation of the universal $S$-bundle (definition as in \cite[Sec~2.2]{EG98}).

% Now the computation in Section \ref{sec:compute} proceeds as follows. The hard part is showing that the whole diagram \eqref{eq:dt} is equivariant; i.e., it is isomorphic to the pullback of a morphism of distinguished triangles on $[F^0_{\tilde \beta}(Z\sslash T)/S]$ (compare with the proof of \cite[Lem~A.3.3]{cjw}). It suffices to show that the diagram \eqref{eq:dt1} is equivariant. In fact, it is the pullback of the analogous diagram on $[QG^0_\beta(Z\sslash T)/S]$. This uses \cite[Lem~A.3.2]{cjw}, as well as \cite[Cor~A.2.3]{cjw} to recognize the map
% \[
% (R^\bullet\pi_*n^*\TT_\psi)^\vee \rightarrow \LL_{\psi^0}
% \]
% and its analog on $[QG^0_\beta(Z\sslash T)^\circ/S]$ as canonical obstruction theories so that \cite[Lem~A.3.2]{cjw} applies to this morphism as well. To get commutativity of the remaining squares, use \cite[Lem~A.4.3]{cjw} to get two morphisms of distinguished triangles, to one of which apply the functor $R^\bullet \pi_*n^*$.

Now the computation in Section \ref{sec:compute} proceeds as follows. The diagram \eqref{eq:dt} is equivariant; i.e., it is isomorphic to the pullback of a morphism of distinguished triangles on $[F^0_{\tilde \beta}(Z\sslash T)/S]$. This is true because the diagram \eqref{eq:dt1} is equivariant by \cite[Lem~A.3.3]{cjw}. Next, to compute the equivariant Euler class in Corollary \ref{cor:pullback}, note that since $S$ commutes with $G$ its action on the lie algebra $\g$ is trivial. The remainder of the proof is the same as in the non-equivariant case.
\end{proof}

\subsection{Twisted $I$-functions}
Let $S$ be a torus and suppose we have an action of $S \times G$ on $Z$ as in Section \ref{sec:equivariant}. Furthermore,
let $R=\CC^*$ act trivially on $Z$ with equivariant parameter $\mu$; note this induces the trivial action on $F_\beta(Z\sslash G)$ as a moduli space of maps.
Let $E$ be a $S\times G$-equivariant vector bundle on $Z$, and let $\CC_\mu$ be the $R$-equivariant vector bundle on $Z$ that is topologically trivial and has its $R$-action given by scaling fibers. Let $E_G$ denote the $S\times R$-equivariant vector bundle on $[Z/G]$ corresponding to $E\otimes \CC_\mu$. Recall that $\CC^*$ acts on $\specialP$ via \eqref{eq:action1} and hence on $F_\beta(Z\sslash G) \times \PP^1$, and that the universal map $n:F_\beta(Z\sslash G) \times \PP^1 \rightarrow [Z/G]$ is invariant with respect to this action. So $n^*E_G$ is naturally $S\times R \times \CC^*$-equivariant.
We assume that the complex $R\pi_*n^*E_G$ has a $S\times R \times \CC^*$-equivariant global resolution by vector bundles; i.e., it is an element of the rational Grothendieck group
\[
K^\circ_{S\times R \times \CC^*}(F_\beta(Z\sslash G) = K^\circ_{S\times\CC^*}(F_\beta)\otimes \QQ[\mu, \mu^{-1}]
\]
of $S\times R \times\CC^*$-equivariant vector bundles on $F_\beta(Z\sslash G)$. This assumption holds, for example, if $R^1\pi_*n^*E_G$ is zero and $R^0\pi_*n^*E_G$ is a vector bundle (see also \cite[Sec~6.2]{stable_qmaps}). 

Fix an invertible multiplicative characteristic class $\bc$ defining a group homomorphism
\[
\bc: K^\circ_{S\times R \times \CC^*}(F_\beta(Z\sslash G)) \rightarrow (H^*_{S\times R \times\CC^*}(F_\beta(Z\sslash G), \QQ))^\times
\]
to the group of units in $H^*_{S \times R \times \CC^*}(F_\beta(Z\sslash G), \QQ)$. A priori, $\bc$ may be defined only for vector bundles; its invertibility means its definition extends to elements of $K^\circ$. Let $\underline{E}_G$ denote the $S\times R$-equivariant vector bundle on $Z\sslash G$ induced by $E\otimes \CC_{\mu}$. Now we define the $S$-equivariant, $\bc(E)$-twisted $I$-function to be
\[
I^{Z\sslash G,\,S,\, \bc(E)}(z) = 1+ \sum_{\beta\neq 0}q^\beta I^{Z\sslash G, \,S,\,\bc(E)}_\beta(z)
\]
where
\begin{equation}\label{eq:twisted1}
I^{Z\sslash G, \,S,\,\bc(E)}_\beta(z) = \bc(\underline{E}_G)^{-1}(ev_{\bullet})_*\left(\frac{[F_\beta(Z\sslash G)]^{S\times R,\vir}\cap \bc(R\pi_*n^*E_G)}{e_{S\times R \times\CC^*}(N^{vir}_{F_{\beta}(Z\sslash G)})}\right).
\end{equation}
(see \cite[(7.2.3)]{wcgis0}). Note that the torus $R$ is omitted from the superscripts in the $I$-function notation.

For the abelianization theorem, observe that $E$ is naturally a $T$-equivariant vector bundle on $Z$, so we can also define the $\bc(E)$-twisted $I$-function of $Z\sslash T$.

\begin{corollary}\label{cor:twisted}
If the class $\bc$ is functorial with respect to pullback, then Theorem \eqref{thm:main} holds with $I^{Z\sslash G, \,S,\,\bc(E)}_\beta(z)$ and $I^{Z\sslash T, \,S,\,\bc(E)}_\beta(z)$ in place of $I^{ Z\sslash G}_\beta(z)$ and $I^{ Z\sslash T}_\beta(z)$.
\end{corollary}
\begin{proof}
To complete the computation in Section \ref{sec:proof}, first note that 
\[g^*\bc(\underline{E}_G)^{-1} = \bc(g^*\underline{E}_G)^{-1} = \bc(\underline{E}_T)^{-1}.\] 
The remainder of the computation is the same until the last line when we replace the numerator in the right-hand side of \eqref{eq:mainproof1} with
\[
{(ev_\bullet)_*\psi_{\tilde \beta}^*([F_{\tilde \beta}(Z\sslash G)]^{vir}\cap e_{S\times R\times\CC^*}(N_{F_{\tilde \beta}(Z\sslash G)}^{vir})^{-1} \cap\bc(R\pi_*n^*E_G))}.
\]
By functoriality of $\bc$, the term $\psi_{\tilde \beta}^*(\bc(R\pi_*n^*E_G))$ is equal to
\[
\bc(\psi_{\tilde \beta}^*R\pi_*n^*E_G) = \bc(R\pi_*n^*\psi^*E_G)
\]
where $\psi$ is the natural map from $[Z/T]$ to $[Z/G]$. The bundle $\psi^*E_G$ is just $E_T$.
\end{proof}

\begin{remark}\label{rmk:twist}The standard application of twisted invariants is to choose $E$ that satisfies $R^1\pi_*n^*E_G=0$ and set $\bc$ to be the $R$-equivariant Euler class $e_S$. Then the non-equivariant limit of \eqref{eq:twisted1} exists---i.e., one can set $\mu=0$. This non-equivariant limit is the definition of the twisted $I$-function in \cite[Sec~7.2]{wcgis0}. Taking the non-equivariant limit of Corollary \ref{cor:twisted}, we see that abelianization holds for these twisted $I$-functions as well. We will denote these non-equivariant, Euler-twisted $I$-functions by $I^{Z\sslash G, E}$.
\end{remark}

\subsection{Big $I$-functions}

The $I$-function we have been discussing in this paper is most directly related to Gromov-Witten invariants with only one insertion, from which one can easily obtain information about invariants with insertions in $H^2(Z\sslash G, \QQ)$ by using the divisor equation. The \textit{big I-function} is related to Gromov-Witten invariants with arbitrary insertions, and it is defined in \cite{bigI} to be the generating series
\begin{equation}\label{eq:bigIfunc}
\II^{Z\sslash G}(z) = 1+ \sum_{\beta\neq 0}q^\beta \II^{Z\sslash G}_\beta(z) \quad \quad \text{where} \quad \quad \II^{Z\sslash G}_\beta(z) = (ev_{\bullet})_*\left(\exp(\evhat_\beta^*(\bt)/z)\frac{[F_\beta]^{vir}}{e_{\CC^*}(N^{vir}_{F_{\beta}})}\right).
\end{equation}
$F_\beta:= F_\beta(Z\sslash G)$ and $ev_\bullet$ are defined as in \eqref{eq:Ifunc}, and the sum is over all $I$-effective classes $\beta$ of $(Z, G, \theta)$ (but we have yet to define the notation $\exp(\evhat_\beta^*(\bt)/z)$). The goal of this section is to prove an abelianization formula for $\II^{Z\sslash G}(z)$ when $Z$ is a vector space, yielding a closed formula for $\II^{Z\sslash G}(z)$ in this situation.

Let $S$ be a torus acting on $Z$ as in Section \ref{sec:equivariant}---we will define \eqref{eq:bigIfunc} $S$-equivariantly. If $Z^s \subset Z$ is any locus of stable points, recall the Kirwan map
\[
\kappa_G: H^*_{S\times G}(Z, \QQ) \rightarrow H^*_S(Z^s/G, \QQ).
\]
We will write out the definition of the Kirwan map when $S$ is trivial. Let $EG$ be the universal principal $G$-bundle. Then we have maps
\[
EG\times_G Z \xleftarrow{a} EG\times_G Z^s \xrightarrow{b} Z^s(G)/G
\]
where $a$ is an open embedding and $b$ is projection to the second factor. Then $b^*$ induces an isomorphism on cohomology, and the Kirwan map is defined by $\kappa_G = (b^*)^{-1}\circ a^*.$ This map is surjective by \cite{Kir84}.

In similar spirit we define
\[
\evhat_\beta^*:H^*_{G\times S}(Z, \QQ)\otimes_{\QQ}\QQ[z] \rightarrow H^*_{S\times\CC^*}(F_\beta, \QQ).
\]
Let $\P\rightarrow F_\beta \times \specialP$ be the universal principal bundle and let $\sigma: F_\beta \times \specialP \rightarrow \P \times_G Z$ be the universal section. When $S$ is trivial, the map $\evhat_\beta^*$ is simply the pullback in cohomology along the composition of maps
\[
F_\beta \xrightarrow{(id, 0)} F_\beta \times \specialP \xrightarrow{\sigma} \P\times_G Z \rightarrow EG \times_G Z.
\]

Now we can define the notation in \eqref{eq:bigIfunc}. Fix a homogeneous basis $\gamma_i$ of $H^*_S(Z\sslash G, \QQ)$. Let $\tilde \gamma_i \in H^*_{S\times G}(Z, \QQ)$ be classes such that $\kappa_G(\tilde\gamma_i) =\gamma_i$, and set
\[
\bt = \sum_i \tilde \gamma_i t_i
\]
for $t_i$ some formal variables. The term $\exp(\evhat_\beta^*(\bt)/z)$ is interpreted as a polynomial in the $t_i$ with coefficients in $H^*_{S\times\CC^*}(F_\beta, \QQ)$ via the power series expansion of the exponential.

When $Z$ is a vector space, we can explicitly compute \eqref{eq:bigIfunc} as follows. By Proposition \ref{prop:chow}, the classes $\gamma_i$ are uniquely determined by their pullbacks $g^*\gamma_i \in H^*_S(Z^s(G)/T, \QQ)$, and these pullbacks may be expressed as $W$-invariant polynomials in the classes $c_1(\L_{\xi_j})$, where $\xi_j$ are the characters of the $T$-action on $Z$. Write 
\[g^*\gamma_i = q_i(c_1(\L_{\bxi}))\]
for these polynomials, where $q_i(c_1(\L_{\bxi}))$ is shorthand for $q_i(c_1(\L_{\xi_1}), \ldots, c_1(\L_{\xi_r}))$.
\begin{corollary}\label{cor:bigI}
The big $I$-function of $Z\sslash_\theta G$ satisfies
\begin{equation}
g^* \II^{Z\sslash G}_{\beta}(z) = j^*\left[ \sum_{\tilde \beta \rightarrow \beta} \exp\left(\sum_it_iq_i(c_1(\L_{\bxi})+\tilde \beta(\bxi)z)/z\right)\left( \prod_{\rho} \frac{\prod_{k=-\infty}^{\tilde\beta(\rho)}(c_1(\L_{\rho}) + kz)}{\prod_{k=-\infty}^0 (c_1(\L_{\rho}) + kz)}\right)I^{Z\sslash T}_{\tilde \beta}(z)\right],
\end{equation}
where 
\[
q_i(c_1(\L_{\bxi})+\tilde \beta(\bxi)z) := q_i(c_1(\L_{\xi_1})+\tilde \beta(\xi_1)z, \ldots, c_1(\L_{\xi_r})+\tilde \beta(\xi_r)z)
\]
and the sum is over all $\tilde \beta$ mapping to $\beta$ under the natural map $\rpic: \Hom(\Pic^T(Z), \ZZ) \rightarrow \Hom(\Pic^G(Z), \ZZ)$ and the product is over all roots $\rho$ of $G$.
\end{corollary}
This corollary extends the procedure in \cite[Sec~5.3]{bigI}.
\begin{proof}
The first step is to carefully choose the lifts $\tilde \gamma_i$. We have a commuting diagram of topological spaces
\[
\begin{tikzcd}
ET\times_T Z\arrow[d, "\psi"] & \arrow[l, "a"'] ET\times_Z Z^s(G) \arrow[r, "b", "\sim"'] \arrow[d] & \arrow[d, "g"] Z^s(G)/T \\
EG \times_G Z & \arrow[l, "a"'] EG\times_G Z^s(G) \arrow[r, "b","\sim"'] & Z^s(G)/G
\end{tikzcd}
\]
which leads to a commuting diagram of cohomology maps
\begin{equation}\label{eq:big1}
\begin{tikzcd}
H_{S\times T}^*(Z, \QQ)^W \arrow[r, "\kappa_T"] & H^*_S(Z^s(G)/T,\QQ)^W\\
H^*_{S\times G}(Z, \QQ) \arrow[u, "\sim"', "\psi^*"] \arrow[r, "\kappa_G"] & H^*_S(Z\sslash G, \QQ) \arrow[u, "\sim", "g^*"']
\end{tikzcd}
\end{equation}
The right vertical arrow is an isomorphism by Proposition \ref{prop:chow}, and the left vertical arrow is an isomorphism by \cite[Prop~1]{Br98}. Let 
\[
\tilde \delta_i = q_i(c_1(L_{\bxi})) \in H^*_{S\times T}(Z, \QQ)^W,
\]
so $\kappa_T(\tilde \delta_i) = g^*\gamma_i$. Then set $\tilde \gamma_i = \psi^*\tilde \delta_i$. Commutativity of \eqref{eq:big1} implies that $\kappa_G(\tilde \gamma_i)=\gamma_i$ as desired.

Now we apply the computation in Section \ref{sec:proof} to $\II_\beta^{Z\sslash G}.$ In place of \eqref{eq:mainproof1} we arrive at the formula
\[
g^*\II_\beta^{Z\sslash G} = \sum_{\tilde \beta \mapsto \beta} \frac{(ev_\bullet)_*\psi_{\tilde \beta}^*(\exp(\evhat_{\tilde \beta}^*(\bt)/z)[F_{\tilde \beta}(Z\sslash G)]^{vir}e_{\CC^*}(N_{F_{\tilde \beta}(Z\sslash G)}^{vir})^{-1}))}{\prod_{\rho \in R^-_{\bdtilde(\tilde\beta)}}c_1(\L_\rho)}
\]
where $\evhat_{\tilde \beta}^*$ denotes the composition
\[
H^*_{G\times S}(Z, \QQ)\otimes_{\QQ}\QQ[z] \xrightarrow{\evhat_\beta^*} H^*_{S\times\CC^*}(F_\beta(Z\sslash G), \QQ) \rightarrow H^*_{S\times\CC^*}(F_{\tilde\beta}(Z\sslash G), \QQ)
\]
where the second map is restriction to an open and closed subspace of $F_\beta(Z\sslash G)$. To compute $\psi_{\tilde \beta}^*\evhat_{\tilde \beta}^*$ we use the commuting diagram
\[
\begin{tikzcd}
H^*_{S\times T}(Z, \QQ) \arrow[r, "\evhat_{\tilde \beta}^*"] & H^*_S(F_{\tilde \beta}(Z\sslash T) \cap Z^s(G), \QQ) \\
H^*_{S\times G}(Z, \QQ) \arrow[r, "\evhat_{\tilde \beta}^*"] \arrow[u, "\psi^*"]& H^*_S(F_{\tilde \beta}(Z\sslash G) , \QQ) \arrow[u, "\psi_{\tilde \beta}^*"]
\end{tikzcd}
\]
which follows from the commuting diagram of topological spaces (in the case when $S$ is trivial)
\[
\begin{tikzcd}
ET\times_T Z \arrow[d, "\psi"] & \T \times_T Z \arrow[l] \arrow[d] & F_{\tilde \beta}(Z\sslash T) \cap Z^s(G) \times \specialP \arrow[l, "\sigma"] \arrow[d]& F_{\tilde \beta}(Z\sslash T) \cap Z^s(G) \arrow[l, "{(id, 0)}"]\arrow[d, "\psi_{\tilde \beta}"]\\
EG\times_G Z  & \P \times_G Z \arrow[l]  & F_{\tilde \beta}(Z\sslash G)  \times \specialP \arrow[l, "\sigma"] & F_{\tilde \beta}(Z\sslash G)  \arrow[l, "{(id, 0)}"]
\end{tikzcd}
\]
We see that
\[
\psi_{\tilde \beta}^*\evhat_{\tilde \beta}^*(\tilde \gamma_i) = \evhat_{\tilde \beta}^*\psi^*(\tilde \gamma_i) = \evhat_{\tilde \beta}^*(\tilde \delta_i)
\]
by the definition of $\tilde \gamma_i$. Finally, we have
\[
\evhat_{\tilde \beta}^*(\tilde \delta_i) = ev_\bullet^*q_i(c_1(\L_{\bxi}) + \tilde \beta(\bxi)z)
\]
This follows from \cite[Lem~5.2,~Rmk~5.3]{bigI} when the GIT chamber of $\theta$ has full dimension, but (as pointed out by the referee) it also follows without any additional hypothesis from our description of the universal family on $F_{\tilde \beta}(Z\sslash T)$ in Proposition \ref{prop:ufam} (for details, see the computation of $n_F^*\TT_\psi$ in the proof of Corollary \ref{cor:pullback}).
The rest of the computation proceeds via the projection formula as in Section \ref{sec:proof} and \cite[Sec~5.1]{bigI}.
\end{proof}

\subsection{Applications to Gromov-Witten theory}
The mirror theorems of \cite{wcgis0} (resp. \cite[Thm~3.3]{bigI}) state that $I^{Z\sslash G}$ (resp. $\II^{Z\sslash G}$) is on the Lagrangian cone when $Z\sslash G$ has a torus action with good properties (but \cite{yang} proves results without a torus action). This means that Theorem \ref{thm:main} and Corollary \ref{cor:bigI} can be translated to statements about $J$-functions, though the translation is simplest when $Z\sslash G$ is sufficiently positive. As an example we will use Theorem \ref{thm:main} to show that the abelianization conjectures \cite[Conj~4.2]{ab-nonab} and \cite[Conj~3.7.1]{frob-ab-nonab} hold in good circumstances.

In order to have a clear statement to use in our application, we summarize \cite[Cor~7.3.2]{wcgis0} here. If $E$ is a vector space with a linear $G$-action, we say that the resulting vector bundle on $Z\sslash G$ is \textit{convex} if, when $(C, \P, \sigma, p_i)$ is a general quasimap to $Z\sslash G$ as in Remark \ref{rmk:general-qmaps}, then $H^1(C, \P\times_G E)=0$. (See \cite[Prop~6.2.3]{stable_qmaps} for some sufficient conditions for $E$ to be convex.)
% , in fact a convex $G$-representation. This means that if $E$ has $T$-weights $\epsilon_1, \ldots, \epsilon_r$, then for every $\theta$-effective class $\beta$ we have 
% \[\beta(Z \times \CC_{\epsilon_i}) \geq 0 \quad \quad \text{for}\quad i=1, \ldots, r.\]
% Since $E$ has an action by $G$ it defines a $G$-equivariant vector bundle $Z \times E$ on $Z$. Then $E$ being convex is equivalent to saying that if $(\specialP, \P, \sigma, \bx)$ is a quasimap to $Z\sslash G$, then $H^1(\PP^1, \P\times_G E)=0$, which is the definition of convex from Gromov-Witten theory. 
The following theorem applies to the twisted theory described in Remark \ref{rmk:twist}.
\begin{theorem}[{\cite[Cor~7.3.2]{wcgis0}}]\label{thm:mirror}
Assume $Z\sslash_\theta G$ has an $S$-action with isolated fixed points. Let $E$ be a convex representation satisfying 
\begin{equation}\label{eq:mirror-thm}
\beta(\det(T_Z))-\beta(\det(Z \times E) \geq 2\end{equation}
for all $I$-effective classes $\beta \neq 0$, where $T_Z$ is the ($G$-equivariant) tangent bundle of $Z$. Then $J^{X\sslash G, E} = I^{X\sslash G, E}$, both $S$-equivariantly and nonequivariantly.
\end{theorem}
\begin{proof}
The cited result \cite[Cor~7.3.2]{stable_qmaps} requires us to check \eqref{eq:mirror-thm} for all $\theta$-effective classes $\beta$, whereas we assume that it holds a priori only for $I$-effective classes. However, the proof of \cite[Cor~7.3.2]{stable_qmaps} only uses \eqref{eq:mirror-thm} for $\theta$-effective classes that are realized as the class of some general quasimap (as in Remark \ref{rmk:general-qmaps}) of \textit{genus zero}. Since we have assumed that \eqref{eq:mirror-thm} holds for all $I$-effective classes, it follows from Remark \ref{rmk:effective-generation} that \eqref{eq:mirror-thm} holds for each $\beta$ that is a class of a general genus-zero quasimap .
\end{proof}

Before we can use \cite[Cor~7.3.2]{wcgis0} and Theorem \ref{thm:main} to say something about \cite[Conj~4.2]{ab-nonab} and \cite[Conj~3.7.1]{frob-ab-nonab}, we must address a difference in setup between the current paper and the cited conjectures: our $Z$ is affine, but in \cite{ab-nonab, frob-ab-nonab} $Z$ is projective. The translation is accomplished (at least in many cases) by Lemmas \ref{lem:correspondence} and \ref{lem:correspondence2}.

\begin{lemma}\label{lem:correspondence}
Let $Z$ be a vector space with an action by a reductive group $G$ and character $\theta$ satisfying the assumptions in Section \ref{sec:statement}. Fix a torus $H \simeq \CC^*$. If $\tau: H \hookrightarrow Z(G)$ is a 1-parameter subgroup of the center of $G$ such that the resulting weights of $H$ on $Z$ are all positive and the character $\theta \circ \tau$ also has a positive exponent, then 
\begin{enumerate}
\item $\PP_{\tau}(Z):= (Z \setminus \{0\}) / H$ is a weighted projective space with an action by a reductive group $\overline G = G/H$,
\item $\OO_{\PP_{\tau}(Z)}(\theta) := (Z\times \CC_{\theta\circ \tau})/H$ is a $\overline G$-linearized ample line bundle on $\PP_{\tau}(Z)$,
\item Every semi-stable point for $\OO_{\PP_{\tau}(Z)}(\theta)$ is stable, and
\item $Z\sslash_{\theta} G = \PP_{\tau}(Z)\sslash_{\OO_{\PP_{\tau}(Z)}} \overline G$
\end{enumerate}
\end{lemma}
\begin{proof}
Statement (1) is an immediate consequence of the hypotheses. The $\overline G$-action in statement (2) comes from the diagonal $G$-action on $V \times \CC_{\theta}$ (here, it is important that $\tau$ lands in the center of $G$). For $n \in \ZZ_{> 0}$, we have a fiber square
\[
\begin{tikzcd}
(Z \setminus \{0\})\times\CC_{n\theta} \arrow[r] \arrow[d] & \OO_{\PP_{\tau}(Z)}(n\theta) \arrow[d] \\
Z\setminus \{0\} \arrow[r, "\pi"] & \PP_\tau(Z)
\end{tikzcd}
\]
where the horizontal maps are $H$-torsors and are also equivariant with respect to the projection homomorphism $G \rightarrow \overline G.$ Hence pullback defines a 1-to-1 correspondence between $G$-equivariant sections of $(Z\setminus \{0\})\times \CC_{n\theta} \rightarrow Z\setminus \{0\}$ and $\overline G$-equivariant sections of $\OO_{\PP_\tau(Z)}(n\theta) \rightarrow \PP_\tau(Z)$. The zero locus of a section of $(Z\setminus \{0\})\times \CC_{n\theta}$ is the inverse image under $\pi$ of the zero locus of the corresponding section of $\OO_{\PP_\tau(Z)}(n\theta) \rightarrow \PP_\tau(Z)$. This shows (3). Finally, (4) follows from descent for closed subschemes.
\end{proof}

\begin{example}\label{ex:standard}
Lemma \ref{lem:correspondence} applies if $Z$ is a vector space with a \textit{linear} $G$-action that contains the dilations, and if the composition of $\theta$ with a dilation yields a character $\CC^*\rightarrow \CC^*$ with a positive exponent. 
\end{example}

As pointed out by the referee, Lemma \ref{lem:correspondence} always applies in the following situation.

\begin{lemma}\label{lem:correspondence2}
Let $(Z, G, \theta)$ be a triple satisfying the assumptions in Section \ref{sec:statement} with $Z$ a vector space. If $Z\sslash T$ is projective, then the hypotheses of Lemma \ref{lem:correspondence} are satisfied.
\end{lemma}
\begin{proof}
Let $\Sigma \subset \chi(T)$ be the cone in the character lattice of $T$ generated by the weights of $V$ with respect to $T$. Since these weights come from a $G$-representation, $\Sigma$ is sent to itself by the action of $W$ on $\chi(T)$. Since $Z\sslash T$ is projective, by \cite[Prop~14.3.10]{CLS}, $\Sigma$ is strongly convex, hence by \cite[Prop~1.1.12]{CLS} the dual $\Sigma^\vee$ in the lattice of cocharacters has nonempty interior. In other words, we have a 1-parameter subgroup $\tau': H \rightarrow T$ such that the resulting weights of $H$ on $Z$ are all positive. 

Now set $\tau = \sum_{w \in W} w\cdot \tau'$. Since $W$ sends $\Sigma$ and hence $\Sigma^\vee$ to itself, $\tau$ is also in the interior of $\Sigma^\vee$. By definition of $\tau$ we have $w\cdot \tau = \tau$; plugging in $h\in H$ we see that $w\cdot \tau(h) = \tau(h)$, so that $\tau$ is actually a 1-parameter subgroup of (the identity component of) $T^W.$ Since the inclusion $Z(G) \hookrightarrow T^W$ is an isomorphism on identity components (see e.g. \cite{overflow}), we see that $\tau$ is central. 

Finally, since $Z\sslash_{\theta} T$ is projective, $\theta$ is not trivial. Since $Z^{ss}_{\theta}(T)$ is not empty we see that for some $k \geq 0$ the character $k\theta$ is a nonnegative linear combination of the weights, and some coefficient is strictly positive. Hence $\theta \circ \tau$ also has positive exponent. 
\end{proof}

Now we are ready to study the conjecture \cite[Conj~3.7.1]{frob-ab-nonab}, which is a priori a statement about the Frobenius manifolds defined by the Gromov-Witten theories of $Z\sslash T$ and $Z\sslash G$.

\begin{corollary}\label{cor:fm-abelianization}
Let $(Z, G)$ be a pair satisfying the hypotheses of Lemma \ref{lem:correspondence} and
let $S$ be a torus acting as in Section \ref{sec:equivariant}. Assume that $Z^{us}$ has codimension at least 2. If $Z\sslash G$ is Fano of index at least 2, and if the $S$-action has isolated fixed points, then the conjecture \cite[Conj~3.7.1]{frob-ab-nonab} holds.
\end{corollary}

We make the (mild) assumption on codimension in Corollary \ref{cor:fm-abelianization} to be in agreement with the context of the cited conjecture. %According to \cite[Sec~3.1]{frob-ab-nonab}, this condition is automatic when $(Z, G)$ is as in Example \ref{ex:standard}.

\begin{proof}
Our strategy is to first apply the reconstruction result \cite[Thm~4.3.6]{frob-ab-nonab} to reduce the Frobenius manifold correspondence to the relationship of ``small'' $J$-functions in \cite[Conj~4.2]{ab-nonab}. The latter statement essentially Theorem \ref{thm:main} combined with \cite[Cor~7.3.2]{wcgis0}, but in executing this strategy we have to be careful in a few places. 

We first resolve the above-mentioned difference in setup: our $Z$ is affine but in \cite{ab-nonab, frob-ab-nonab} $Z$ is projective. By Lemma \ref{lem:correspondence}, the quotient $Z\sslash_{\theta} G$ is identified with the projective quotient $\PP_\tau(Z)\sslash_{\OO_{\PP_{\tau}(Z}}\overline G$. Since $\tau: H \rightarrow T \subset G$ is injective, it is also split, so $\overline T = T/H$ is a torus. We use this as a maximal torus of $\overline G$. 

Now the reconstruction theorem in \cite[Thm~4.3.6]{frob-ab-nonab} applies because the localized equivariant cohomology ring
\[
H^*_S(Z\sslash G, \QQ) \otimes \mathrm{Frac}(H^*_S(pt, \QQ))
\]
is generated by divisors (this follows from the torus localization theorem). Also, since $Z\sslash G = \PP_\tau(Z)\sslash_{\OO_{\PP_{\tau}(Z}}\overline G$ is Fano of index at least 2, the inequality \eqref{eq:mirror-thm} holds (with $E=0$). 

We must be careful one more time: the ``small $J$-function'' in \cite{ab-nonab, frob-ab-nonab} differs from the small $I$-function considered in this paper; in fact, it corresponds to the \textit{big} $I$-function in \eqref{eq:bigIfunc} with the classes $\gamma_i$ in $\bt$ restricted to a basis of $H^2(Z\sslash G, \QQ)$. However, Theorems \ref{thm:main} and \ref{thm:mirror} yield the expected formulas for these ``middle-sized'' $I$ and $J$-functions as explained in \cite[404]{wcgis0}. Now the Corollary follows from Theorem \ref{thm:main} and the mirror theorem of \cite[Cor~7.3.2]{wcgis0} (restated in Theorem \ref{thm:mirror} above).
\end{proof}

\subsection{Example: A Grassmannian bundle on a Grassmannian variety}\label{sec:example}
To illustrate the geometry in Proposition \ref{prop:diagram} and to give an example of applying Theorem \ref{thm:main}, in this section we investigate a family of Fano hypersurfaces.

\begin{theorem}\label{thm:example}
Let $Q:=Gr_{Gr(k,n)}(\ell,U^{\oplus m})$ be the Grassmannian bundle of $\ell$-planes in $m$ copies of the tautological bundle $U$ on $Gr(k, n)$, and assume $n - \ell m \geq 2$ and $km \geq 3$. Let $\D$ be {the dual of} the determinant of the tautological bundle on $Q$. Let $S=(\CC^*)^{n+m}$ act on $Q$ and $\D$ as defined in Section \ref{sec:equivariant_example}. Then the $\D$-twisted equivariant small $J$-function of $Q$ equals $1+\sum_{d,e>0} q_1^dq_2^eJ_{(d,e)}(z)$, where $J_{(d,e)}(z)$ equals
\begin{equation}\label{eq:example}
\begin{aligned}
  \sum_{\substack{d_1+\ldots+d_k=d\\e_1+\ldots+e_\ell=e}}
 &\prod_{\substack{i,j=1\\i\neq j}}^k\left( \frac{\prod_{h=-\infty}^{d_i-d_j}(x_i-x_j + hz)}{\prod_{h=-\infty}^{0}(x_i-x_j + hz)}\right)
 \prod_{\substack{i,j=1\\i\neq j}}^\ell\left( \frac{\prod_{h=-\infty}^{e_i-e_j}(y_i-y_j + hz)}{\prod_{h=-\infty}^{0}(y_i-y_j + hz)}\right)\\
 \mathbf{\cdot}&
 \prod_{i=1}^k\prod_{\alpha=1}^n\left( \frac{\prod_{h=-\infty}^0(x_i+\lambda^1_\alpha+hz)}{\prod_{h=-\infty}^{d_i}(x_i+\lambda^1_\alpha+hz)}\right)\prod_{i=1}^k\prod_{j=1}^\ell\prod_{\beta=1}^m\left(\frac{\prod_{h=-\infty}^0(y_j-x_i+\lambda_\beta^2+hz)}{\prod_{h=-\infty}^{e_j-d_i}(y_j-x_i+\lambda_\beta^2+hz)} \right)\\
 \mathbf{\cdot}&\left(\frac{\prod_{h=-\infty}^{e}(\sum_{j=1}^{\ell} y_j+ hz)}
 {\prod_{h=-\infty}^0 (\sum_{j=1}^{\ell} y_j+ hz)}\right),
\end{aligned}
\end{equation}
where the $x_i$ are the Chern roots of the dual of $U$, the $y_j$ are the Chern roots of the dual of the tautological bundle on $Q$, and the $\lambda$'s are the equivariant parameters. 
\end{theorem}
In the formula \eqref{eq:example}, the first line is the factor coming from the roots of $G$, the second line is the $J$-function of the abelian quotient, and the last line is the $\D$-twisting factor.

\subsubsection{Defining the target}
To define the GIT target, choose integers $k, n, \ell,$ and $m$ with $k<n$ and $\ell < km$. Let $M_{k\times n}$ denote the space of $k\times n$ matrices with complex entries, and set
\begin{itemize}
\item the vector space $Z = M_{k\times n} \times M_{\ell \times km}$
\item the group $G = GL_k \times GL_\ell$
\item the action $(g, h)\cdot(X, Y) = (gX, h Y \mathrm{diag}(g^{-1}))$ for $(g, h) \in G$ and $(X, Y)\in X$ where $\mathrm{diag}(g^{-1})$ is the block diagonal $km\times km$ matrix with $g^{-1}$ repeated $m$ times
\item the character $\theta(g, h) = \det(g)\det(h)$
\end{itemize}
For the maximal torus $T \subset G$ choose the group of diagonal matrices.
We can check stability of points using the numerical criterion \cite[Prop~2.5]{king}. It is straightforward to compute that 
\[Z^{ss}_{\theta}(G) = Z^s_{\theta}(G) = (M_{k\times n}\setminus \Delta) \times (M_{\ell\times km} \setminus \Delta),\]
where $\Delta$ denotes matrices of less than full rank. 
Thus, 
\[Z\sslash_{\theta}G = Gr_{Gr(k, n)} (\ell, U^{\oplus m}) =:Q
\]is the Grassmannian bundle of $\ell$-planes  in $m$ copies of the tautological bundle $U$ on $Gr(k, n)$.
Observe that $\det(T_Z)$ is the $G$-equivariant line bundle on $Z$ corresponding to the character
\[
(g,h)\mapsto \det(g)^{n-\ell m}\det(h)^{km}.
\]
The line bundle $\D$ is the $G$-equivariant line bundle on $Z$ corresponding to the character
\begin{equation}\label{eq:D-char}
(g, h) \mapsto \det(h).
\end{equation}

\subsubsection{Quasimaps and I-function}
To make \eqref{eq:main} explicit for our chosen target we must compute the $I$-effective classes (Definition \ref{def:effective}). As an illustration of Proposition \ref{prop:diagram} we will also describe $F_{\tilde \beta}(Z\sslash G)$. A stable quasimap to $Q$ is equivalent to the following data:
\begin{itemize}
\item a rank-$k$ vector bundle $\oplus_{i=1}^k \OO_{\PP^1}(d_i)$ and a rank-$\ell$ vector bundle $\oplus_{j=1}^\ell \OO_{\PP^1}(e_j)$
\item a section $\sigma$ of $\left[\oplus_{i=1}^k \OO_{\PP^1}(d_i)^{\oplus n}\right] \oplus \left[\oplus_{j=1}^\ell \oplus_{i=1}^k \OO_{\PP^1}(e_j-d_i)^{\oplus m}\right],$ written as a $k\times n$ and $\ell\times mk$ matrix of polynomials, such that all but finitely many points $\bx \in \PP^1$ satisfy $\sigma(\bx) \in Z^s.$
\end{itemize}
This data defines a quasimap to $Z\sslash T$ of class $\tilde \beta = (d_1, \ldots, d_k, e_1, \ldots, e_\ell) \in \Hom(\chi(T), \ZZ)$; the class as a quasimap to $Z\sslash G$ is $\beta=(\sum d_i, \sum e_j)\in \Hom(\chi(G), \ZZ).$ In order to have finitely many basepoints, a stable quasimap must have $d_i \geq 0$, hence also $e_j\geq 0$. Now we can check that \eqref{eq:mirror-thm} is satisfied: if $\beta=(d,e)$ is $I$-effective, then
\[
\beta(\det(T_Q)) - \beta(\det(\D)) = (n-\ell m)d + (km-1)e \geq 2d+2e \geq 2
\]
when $(d, e) \neq (0,0)$ (the first inequality uses our assumptions in Theorem \ref{thm:example}).

Finally we describe $F_{\tilde \beta}(Z\sslash G)$. For simplicity assume that the sequences $d_1, \ldots, d_k$ and $e_1, \ldots, e_\ell$ are ordered from smallest to largest. The subspace $Z_{\tilde \beta} \subset X$ is $M_{k\times n} \times Z_{\tilde \beta}'$, where $Z_{\tilde \beta}'$ is the subspace of $M_{\ell\times km}$ consisting of matrices $(m_{ij})$ where 
\[m_{ij}=0 \quad\quad \text{if} \quad \quad e_i-d_{(j\;\mathrm{mod}\; m)+1}<0.\]
Such a matrix looks something like
\[
\begin{tikzpicture}[xscale=.3, yscale=-.3]
\draw (-.2,4.5)--(-.5,4.5)--(-.5,-.5)--(-.2,-.5);
\draw (0,0)--(1.8,0)--(1.8,1)--(2.8,1)--(2.8,3)--(4.8,3)--(4.8,4)--(0,4)--(0,0);
\draw (2,0)--(2,.8)--(3,.8)--(3,2.8)--(5,2.8)--(5,4)--(6,4)--(6,0)--(2,0);
\node at (1.5,2.2) {*};
\node at (4.3,1.7) {0};
\begin{scope}[shift={(6.2,0)}]
\draw (0,0)--(1.8,0)--(1.8,1)--(2.8,1)--(2.8,3)--(4.8,3)--(4.8,4)--(0,4)--(0,0);
\draw (2,0)--(2,.8)--(3,.8)--(3,2.8)--(5,2.8)--(5,4)--(6,4)--(6,0)--(2,0);
\node at (1.5,2.2) {*};
\node at (4.3,1.7) {0};
\node at (7,2) {$\cdot$};
\node at (8,2) {$\cdot$};
\node at (9,2) {$\cdot$};
\end{scope}
\begin{scope}[shift={(16,0)}]
\draw (0,0)--(1.8,0)--(1.8,1)--(2.8,1)--(2.8,3)--(4.8,3)--(4.8,4)--(0,4)--(0,0);
\draw (2,0)--(2,.8)--(3,.8)--(3,2.8)--(5,2.8)--(5,4)--(6,4)--(6,0)--(2,0);
\node at (1.5,2.2) {*};
\node at (4.3,1.7) {0};
\draw (6.2,4.5)--(6.5,4.5)--(6.5,-.5)--(6.2,-.5);
\end{scope}
\end{tikzpicture}
\]
where the entries labeled ``0'' are required to be zero and the entries labeled ``*'' are not. The same $\ell \times k$ pattern of *'s and 0's is repeated $m$ times.
The group $P_{\tilde \alpha}\subset G$ is $P_1\times P_2$ where $P_1$ is the parabolic subgroup of $GL_k$ equal to block lower triangular matrices with blocks determined by the multiplicities of the $d_i$, and $P_2 \subset GL_\ell$ is similarly defined by the $e_j$. Hence in Proposition \ref{prop:diagram} we have a series of maps
\[
F_{\tilde \beta}(Z\sslash G) = Z^s_{\tilde \beta}(G)/P_{\tilde \alpha} \hookrightarrow Z^s(G)/P_{\tilde \alpha} \rightarrow Z^s(G)/G = Z\sslash G
\]
whose composition is $ev_\bullet$. The first arrow is a closed embedding and the second is a flag bundle.

\subsubsection{A good torus action}\label{sec:equivariant_example}
The target $Q$ has a torus action with isolated fixed points. Let $S = (\CC^*)^n \times (\CC^*)^m$ act on $Z$ as follows: if $s_1$ is an $n\times n$ diagonal matrix and $s_2$ is a $km\times km$ diagonal matrix with $m$ constant $k\times k$ diagonal blocks, and both $s_1$ and $s_2$ are filled with numbers from $\CC^*$ then
\[
(s_1, s_2)\cdot (X,Y) = (Xs_1, Ys_2).
\]
This action commutes with the action of $G$. We can extend it to a linearization of $\D$ as follows. The total space of $\D$ is $X \times_G \CC$, where $G$ acts on $\CC$ via the character \eqref{eq:D-char}. Define $(s_1, s_2)\cdot(X, Y,z) = (Xs_1, Ys_2, z)$ for $(X,Y)\in Z$ and $z\in\CC$.

The $S$-action on $Q$ has isolated fixed points as follows.
For $I\subset\{1, \ldots, n\}$ let 
$D_I$ denote the $k\times n$ matrix which has the identity matrix in the $I$-columns and zeros elsewhere. 
Similarly, for $J\subset\{1, \ldots, km\}$, let $D_J$ denote the 
$\ell\times km$ matrix which has the identity matrix in the $I$-columns and zeros elsewhere. 
Then the fixed points of the $S$-action are $(D_I, D_J)$ for all possible combinations of $I$ and $J$. 

Now we can read off the twisted $I$-function using Corollary \ref{cor:twisted}. We apply the mirror theorem in \cite[Cor~7.3.2]{wcgis0} (stated here as Theorem \ref{thm:mirror}) to conclude that \eqref{eq:example} holds.

\printbibliography

\end{document}